\numberwithin{equation}{section}
\newtheorem{thm}{Theorem}[section]
\newtheorem{prop}[thm]{Proposition}
\newtheorem{cor}[thm]{Corollary}
\newtheorem{lem}[thm]{Lemma}
\theoremstyle{remark}
\newtheorem{rmk}[thm]{Remark}
\theoremstyle{definition}
\newtheorem{defn}{Definition}[section]
\DeclareMathOperator{\E}{\mathbb{E}}
\DeclareMathOperator{\C}{\mathbb{C}}
\DeclareMathOperator{\N}{\mathbb{N}}
\DeclareMathOperator{\R}{\mathbb{R}}
\DeclareMathOperator{\cF}{\mathcal{F}}
\DeclareMathOperator{\cO}{\mathcal{O}}
\DeclareMathOperator{\cL}{\mathcal{L}}
\DeclareMathOperator{\cS}{\mathcal{S}}
\DeclareMathOperator{\bH}{\mathbb{H}}
\newcommand{\pd}[2]{\frac{\partial #1}{\partial #2}}
\newcommand{\pdsup}[3]{\frac{\partial^{#3} #1}{\partial #2^{#3}}}
\newcommand{\der}[2]{\frac{d #1}{d #2}}
\newcommand{\Norm}[2]{\left\Vert #1 \right\Vert_{#2}}
\author{Giacomo Ascione$^\ast$}
\address{$^\ast$ Dipartimento di Matematica e Applicazioni ``Renato Caccioppoli'', Universita degli Studi di Napoli Federico II, 80126 Napoli, Italy}
\author{Yuliya Mishura$^\odot $}
\address{$^\odot $ Department of Probability Theory, Statistics and Actuarial Mathematics, Taras Shevchenko National University of Kyiv, Volodymyrska 64, Kyiv 01601, Ukraine}
\author{Enrica Pirozzi$^\ast$}
\email{giacomo.ascione@unina.it \\
	myus@univ.kiev.ua \\
	enrica.pirozzi@unina.it}
\title[Fokker-Planck equations for the time-changed fOU  process]{The Fokker-Planck equation  for the time-changed fractional Ornstein-Uhlenbeck process}
\begin{document}
\maketitle
\begin{abstract}
In this paper we study some properties of the generalized Fokker-Planck equation induced by the time-changed fractional Ornstein-Uhlenbeck process. First of all, we exploit some sufficient conditions to show that a mild solution of such equation is actually a classical solution. Then we discuss an isolation result for mild solutions. Finally, we prove the weak maximum principle for strong solutions of the aforementioned equation and then a uniqueness result.
\end{abstract}
\keywords{Keywords: fractional Brownian motion, subordinator, Caputo-type derivative, generalized Fokker-Planck equation, Bersntein functions.}
\section{Introduction}
The Ornstein-Uhlenbeck (OU) process is a standard process in the application context. However, its covariance with a fast decay and the fact that the strong Markov property holds makes it to be unrealistic in situations in which memory plays a crucial role. For this reason, in \cite{cheridito2003fractional}, the fractional OU (fOU) process has been introduced as the solution of the fractional Brownian motion (fBm)-driven equation
\begin{equation*}
dU_H(t)=-\frac{1}{\theta}U_H(t)dt+\sigma dB_H(t)
\end{equation*}
where $\theta,\sigma>0$ and $B_H(t)$ is a fBm with Hurst parameter $H \in (0,1)$. Such kind of process exhibits long or short  memory depending on the value of the Hurst parameter (see \cite{cheridito2003fractional,kaarakka2011fractional}). In the context of the applications, memory phenomena occurs for instance in the financial market, hence different kinds of noise have to be implemented to describe them (see for instance \cite{anh2005financial}). Thus, in this direction, the study of the fractional Cox-Ingersoll-Ross process, that can be expressed as the square of a fOU process until it reaches zero, has to be carried on, in particular referring to the hitting time of zero point, see \cite{mishura2018stochastic,mishura2018fractional}. On the other hand, for instance in the field of theoretical neuroscience, one can propose some different kind of noise to generate some memory effects, that are typical of neurons of the prefrontal cortex (see \cite{shinomoto1999ornstein}). In particular, correlated inputs and noises have shown to be effective to reproduce part of these memory effects (see \cite{sakai1999temporally} or \cite{ascione2019stochastic} and references therein). Moreover, other strong effects in correlation can be observed as one considers the leading input stimuli to be stochastic. For these reasons, in \cite{ascione2019fractional} we studied a fOU process with stochastic drift, considering how such drift comes into play in modifying the behaviour of the covariance.\\
From another perspective, memory has been also introduced by changing the time scale from a deterministic one to a stochastic one. This is for instance the case of \cite{leonenko2013correlation,leonenko2013fractional}, where the adjective \textit{fractional} follows from the fact that the usual Kolmogorov equations admit a fractional derivative in time if we apply a \textit{time-change} to the process, in the sense that one composes the process with the inverse of a stable subordinator. In particular in \cite{leonenko2013fractional}, the object that we will call here an $\alpha$-stable time-changed OU process has been introduced and its Kolmogorov equations have been studied. Moreover, in \cite{leonenko2013correlation}, the correlation structure is exposed, showing that   this approach leads to a long-range dependent process. A further generalization has been achieved in \cite{gajda2015time}, where a general subordinator is considered in place of a stable one. Still considering applications in neuroscience, one has to recall that another common issue linked to the role of memory is the rising of heavy-tailed distribution for spiking times (see \cite{gerstein1964random,rodieck1962some}), that are not covered by a standard Leaky Integrate-and-Fire model (see \cite{buonocore2011first}). However, under suitable assumptions on the inverse subordinator, time-changed processes exhibit this \textit{heavy-tail} property of first exit time \cite{ascione2017exit} and then time-changed OU processes revealed to be an \textit{easy-to-handle} tool to obtain such kind of spiking distribution (see \cite{ascione2019semi}).\\
In \cite{ascione2019time} we introduced a time-changed fOU process, i.e. a process obtained by considering the composition between the fOU process $U_H$ and the inverse of a subordinator, and studied some of its properties, together with its generalized Fokker-Planck equation. In particular, in \cite{ascione2019time}, we highlighted the difficulties (as also done in \cite{hahn2011time}) of \textit{time-changing} a Fokker-Planck equation with explicit dependence on time. However, we still have several open questions, in particular concerning the generalized Fokker-Planck equation we introduced.\\
First of all, we would like to express a subordination principle for solutions of the generalized Fokker-Planck equation. This means, in particular, that, given a solution $v$ of the Fokker-Planck equation associated to the fOU process with some initial-boundary data, we would like to have the following relation  for the subordinated solution: $$v_\Phi(t,x)=\int_0^{+\infty}v(s,x)f_\Phi(s,t)ds$$ for some fixed integral kernel $f_\Phi(s,t)$. In particular, we show that this holds true for \textit{mild} solutions of the Fokker-Planck equation, i.e. some weaker form of the solution defined by means of Laplace transforms. Moreover, we show that under some assumptions on the regularity of the function $v$, the subordinated function $v_\Phi$ is actually a solution of the generalized Fokker-Planck equation, thus obtaining a \textit{gain of regularity} result. Such gain of regularity result gives an improvement of \cite[Theorem $7.5$]{ascione2019time}, showing that the probability density function of the time-changed fOU process is always a classical solution of the generalized Fokker-Planck equation.\\
Another open problem concerns uniqueness of the solutions. This is a natural question, since we are actually asking if, once we have solved the generalized Fokker-Planck equation with a suitable initial datum, we actually find the probability density function of the time-changed fOU. We are not able to show uniqueness in the most weak case, i.e. the case of mild solutions, but we are still able to show an \textit{isolation result}, i.e. a non-comparability result on subordinated mild solutions. However, under stronger regularity of the involved functions, we are actually able to prove uniqueness of the solutions of initial-boundary value problems for the generalized Fokker-Planck equation, by means of a weak maximum principle.\\
Finally, let us stress that a particular attention is given to the $\alpha$-stable case, for which we are able to show the differentiability (with respect to the time variable) of the probability density function of the time-changed fOU. This differentiability result will lead to fact that such probability density function is the unique solution (under suitable initial-boundary data) of the respective generalized Fokker-Planck equation.

%
%
%
The paper is structured as follows:
\begin{itemize}
	\item In Section \ref{Sec1} we give some preliminaries concerning the property of the inverse subordinators and we recall some concepts from generalized fractional calculus;
	\item In Section \ref{Sec2} we introduce the time-changed fOU process and recall some properties that were achieved in \cite{ascione2019time};
	\item In Section \ref{Sec3} we introduce the subordination and weighted subordination operators, which will be the main tools of the paper. Moreover, we give a sufficient condition for differentiability of subordinated functions linked to the inverse $\alpha$-stable subordinator;
	\item In Section \ref{Sec4} we consider the generalized Fokker-Planck equation associated to the time-changed fOU process and we show both the subordination principle and the gain-of-regularity result;
	\item Finally, in Section \ref{Sec5} we address the problem of uniqueness of solutions, giving first an isolation result for mild solutions of the generalized Fokker-Planck equation and then a uniqueness result (and a weak maximum principle) for strong solutions.
\end{itemize}
\section{Inverse subordinators and Bernstein functions}\label{Sec1}
Let us recall the definition of subordinator, as given in \cite[Chapter $3$]{bertoin1996levy}. A subordinator $\sigma(t)$ is starting from zero, an increasing (and hence non-negative) L\'evy process. Let us denote by $\Phi(\lambda)$ its Laplace exponent, i.e. a function $\Phi:[0,+\infty) \to \R$ such that
\begin{equation*}
\E[e^{-\lambda \sigma(t)}]=e^{-t\Phi(\lambda)}, \ t\ge 0, \ \lambda \ge 0.
\end{equation*}
In particular the function $\Phi$ belongs to the class of Bernstein functions (see \cite{schilling2012bernstein}) and then   can be  represented as
\begin{equation*}
\Phi(\lambda)=a+b\lambda+\int_0^{+\infty}(1-e^{-s\lambda})\nu_\Phi(ds)
\end{equation*}
where $\nu_\Phi(dt)$ is a measure such that $\int_0^{+\infty}(t \wedge 1)\nu_\Phi(dt)<+\infty$, called the L\'evy measure of $\Phi$. Here we will consider $\Phi$ to be such that $a=b=0$ and $\nu_\Phi(0,+\infty)=+\infty$. In particular, each Bernstein function $\Phi$ determines a unique subordinator $\sigma_\Phi(y)$. Now let us define  the inverse subordinator associated to $\Phi$ as
\begin{equation*}
E_\Phi(t):=\inf\{y>0: \ \sigma_\Phi(y)>t\}.
\end{equation*}
As shown in \cite{meerschaert2008triangular}, our hypotheses on $\Phi$ are enough to guarantee that $E_\Phi(t)$ admits a probability density function $f_\Phi(s,t)$ for each $t>0$. Moreover, it has been shown that
\begin{equation}\label{laplace}
\cL_{t \to \lambda}[f_\Phi(s,t)]=\frac{\Phi(\lambda)}{\lambda}e^{-s\Phi(\lambda)}
\end{equation}
where $\cL_{t \to \lambda}$ denotes the Laplace transform operator.\\
Following the lines of \cite{kochubei2011general,toaldo2015convolution}, we can define the regularized Caputo-type non-local derivatives (induced by $\Phi$) as
\begin{equation}\label{caputo}
\partial^\Phi u(t)=\der{}{t}\int_0^t\bar{\nu}_\Phi(t-\tau)(u(\tau)-u(0))d\tau
\end{equation}
for any sufficiently regular function $u:[0,+\infty)\to \R$, where $\bar{\nu}_\Phi(t)=\nu_\Phi(t,+\infty)$. In particular, if $u$ belongs to $C^1(0,+\infty)$, then it holds
\begin{equation*}
\partial^\Phi u(t)=\int_0^t\bar{\nu}_\Phi(t-\tau)u'(\tau)d\tau.
\end{equation*}
A particular case is given by the choice $\Phi(\lambda)=\lambda^\alpha$ for $\alpha \in (0,1)$. Indeed, in this case, we get the $\alpha$-stable subordinator $\sigma_\alpha(t)$. As shown in \cite{meerschaert2013inverse}, $\sigma_\alpha(t)$ and $E_\alpha(t)$ are absolutely continuous random variables for any $t>0$ and, if we denote by $g_\alpha(s)$ the probability density function of $\sigma_\alpha(1)$ and $f_\alpha(s,t)$ the probability density function of $E_\alpha(t)$, it has been shown that
\begin{equation}\label{eqftog}
f_\alpha(s,t)=\frac{t}{\alpha}s^{-1-\frac{1}{\alpha}}g_\alpha(ts^{-\frac{1}{\alpha}}).
\end{equation}
The following lemma contains the result established in \cite{ascione2019time}, and in this paper it is proven in  Remarks $4.2$ and $4.4$, therefore now its proof is omitted.
\begin{lem}\label{lem:mominv}
Let us fix $H \in \left(\frac{1}{2},1\right)$. Then $\E[E_\alpha^{-H}(t)]<+\infty$ for any $t>0$. Moreover, for any $n>1$ it holds that $\E[E_\alpha^{-nH}(t)]=+\infty$.
\end{lem}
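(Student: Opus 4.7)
The plan is to reduce both claims to the well-known moment property of the $\alpha$-stable subordinator, namely that $\E[\sigma_\alpha(1)^q]<+\infty$ if and only if $q<\alpha$ (which follows from the Pareto-type tail $g_\alpha(s)\sim C s^{-\alpha-1}$ as $s\to+\infty$). The bridge between $E_\alpha(t)$ and $\sigma_\alpha(1)$ is exactly the density identity \eqref{eqftog}, so the strategy is: write $\E[E_\alpha^{-p}(t)]$ as an integral against $f_\alpha(s,t)$, use \eqref{eqftog}, and perform a change of variables that converts it into a moment of $\sigma_\alpha(1)$.

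Concretely, for $p>0$ I would start from
\begin{equation*}
\E[E_\alpha^{-p}(t)]=\int_0^{+\infty}s^{-p}f_\alpha(s,t)\,ds=\frac{t}{\alpha}\int_0^{+\infty}s^{-p-1-\frac{1}{\alpha}}g_\alpha(ts^{-\frac{1}{\alpha}})\,ds,
\end{equation*}
and then substitute $u=ts^{-1/\alpha}$, so that $s=t^\alpha u^{-\alpha}$ and $ds=-\alpha t^\alpha u^{-\alpha-1}du$. A routine bookkeeping of the exponents collapses the prefactors and yields the clean identity
\begin{equation*}
\E[E_\alpha^{-p}(t)]=t^{-\alpha p}\int_0^{+\infty}u^{\alpha p}g_\alpha(u)\,du=t^{-\alpha p}\E[\sigma_\alpha(1)^{\alpha p}].
\end{equation*}
(Equivalently, one can derive this more conceptually by remarking that $E_\alpha(t)\stackrel{d}{=}t^\alpha\sigma_\alpha(1)^{-\alpha}$, which follows from the self-similarity $\sigma_\alpha(y)\stackrel{d}{=}y^{1/\alpha}\sigma_\alpha(1)$ together with the definition of $E_\alpha$ as a first-passage time.)

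With the identity in hand, both claims become immediate applications of the stable moment criterion: the right-hand side is finite iff $\alpha p<\alpha$, i.e.\ iff $p<1$. Taking $p=H\in(1/2,1)$ gives $\alpha H<\alpha$ and hence the finiteness of $\E[E_\alpha^{-H}(t)]$; taking $p=nH$ with $n$ an integer $>1$ gives $nH>1$ (since $H>1/2$), so $\alpha p\ge\alpha$ and the moment diverges. The only subtle step in the whole argument is justifying the change of variables carefully (it is a monotone bijection on $(0,+\infty)$, so no orientation issue arises), and implicitly invoking the correct tail behaviour of $g_\alpha$; there is no real obstacle, which explains why the authors defer it to Remarks 4.2 and 4.4.
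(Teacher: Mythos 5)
Your proof is correct. Note that the paper itself gives no proof of this lemma --- it defers to Remarks $4.2$ and $4.4$ of the cited reference \cite{ascione2019time} --- so there is nothing internal to compare against; your reduction via \eqref{eqftog} and the substitution $u=ts^{-1/\alpha}$ (equivalently, the self-similarity identity $E_\alpha(t)\stackrel{d}{=}t^\alpha\sigma_\alpha(1)^{-\alpha}$) to the criterion $\E[\sigma_\alpha(1)^{q}]<+\infty$ for $0\le q<\alpha$ and $=+\infty$ for $q\ge\alpha$ is the standard route and almost certainly what the cited remarks do. Two small points worth making explicit: the moment criterion for $\sigma_\alpha(1)$ as you state it (``iff $q<\alpha$'') is only valid for $q\ge 0$ (all negative-order moments are finite, by the superexponential decay of $g_\alpha$ at $0$), which is harmless here since $\alpha p>0$; and the second assertion genuinely requires reading $n$ as an integer, i.e.\ $n\ge 2$, since for a real $n>1$ with $nH<1$ (possible when $H$ is close to $1/2$) your own identity shows the moment is \emph{finite} --- you correctly flag this by restricting to integer $n$.
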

On the other hand, let us show the following extremal point property of the Caputo-type non-local derivative (recall that such property is already known in the case $\Phi(\lambda)=\lambda^\alpha$, as shown in \cite{luchko2009maximum}).
\begin{prop}\label{extremval}
	Let $\Phi$ be a Bernstein function regularly varying at infinity of index $\alpha \in (0,1)$. Suppose $u:[0,T]\to \R$ and there exists a maximum point $t_0$ for $u$. If $u \in W^{1,1}(0,t_0)\cap C^1((0,t_0])$ (where $W^{1,1}(0,t_0)$ is the space of absolutely continuous functions in $[0,t_0]$), then $\partial^\Phi u(t_0)\ge 0$.
\end{prop}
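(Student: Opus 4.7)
The plan is to mimic Luchko's strategy for the classical Caputo case, carrying the sign analysis through Stieltjes integration by parts. First, using the absolute continuity of $u$ on $[0,t_0]$ I would write $u(\tau)-u(0)=\int_0^\tau u'(s)ds$, substitute into (\ref{caputo}) and apply Fubini, obtaining
\[
\int_0^t \bar{\nu}_\Phi(t-\tau)(u(\tau)-u(0))d\tau=\int_0^t u'(s)N_\Phi(t-s)ds,
\]
where $N_\Phi(r):=\int_0^r \bar{\nu}_\Phi(\xi)d\xi=\int_0^{+\infty}(r\wedge s)\nu_\Phi(ds)$, which is finite and continuous with $N_\Phi(0)=0$ since $\nu_\Phi$ is a L\'evy measure. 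The goal is then to differentiate this convolution at $t=t_0$ and arrive at the representation $\partial^\Phi u(t_0)=\int_0^{t_0} u'(s)\bar{\nu}_\Phi(t_0-s)ds$.

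Granted this representation, I would substitute $r=t_0-s$ and introduce $v(r):=u(t_0-r)-u(t_0)$, which satisfies $v(0)=0$, $v(r)\le 0$ on $[0,t_0]$ by the maximality of $t_0$, and $v'(r)=-u'(t_0-r)$, so that $\partial^\Phi u(t_0)=-\int_0^{t_0}v'(r)\bar{\nu}_\Phi(r)dr$. Integration by parts in the Stieltjes sense on $[a,t_0]$, using that $\bar{\nu}_\Phi$ is right-continuous and non-increasing with associated measure $-d\bar{\nu}_\Phi=\nu_\Phi$, yields
\[
\int_a^{t_0}v'(r)\bar{\nu}_\Phi(r)dr=v(t_0)\bar{\nu}_\Phi(t_0)-v(a)\bar{\nu}_\Phi(a)+\int_a^{t_0}v(r)\nu_\Phi(dr).
\]
Sending $a\to 0^+$, the boundary contribution $v(a)\bar{\nu}_\Phi(a)$ vanishes: the $C^1$ regularity of $u$ near $t_0$ gives $|v(a)|\le Ca$, while the regular variation of $\Phi$ at infinity of index $\alpha\in(0,1)$, combined with the Karamata Tauberian theorem applied to $\cL[\bar{\nu}_\Phi](\lambda)=\Phi(\lambda)/\lambda$, produces $\bar{\nu}_\Phi(a)\sim a^{-\alpha}L(1/a)/\Gamma(1-\alpha)$ as $a\to 0^+$, hence $a\bar{\nu}_\Phi(a)\to 0$. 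In the limit one is left with
\[
\int_0^{t_0}v'(r)\bar{\nu}_\Phi(r)dr=v(t_0)\bar{\nu}_\Phi(t_0)+\int_0^{t_0}v(r)\nu_\Phi(dr),
\]
which is a sum of two non-positive terms, since $v(t_0)=u(0)-u(t_0)\le 0$ and $v\le 0$ is integrated against the positive L\'evy measure; the remaining integral is absolutely convergent thanks to $|v(r)|\le C(r\wedge 1)$ and $\int(r\wedge 1)\nu_\Phi(dr)<+\infty$. Therefore $\partial^\Phi u(t_0)\ge 0$.

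The main obstacle I anticipate is the rigorous justification of the differentiation step producing $\partial^\Phi u(t_0)=\int_0^{t_0}u'(s)\bar{\nu}_\Phi(t_0-s)ds$, since $u'$ can be unbounded near $0$ and $\bar{\nu}_\Phi$ is typically singular near $0$. I would address this by splitting the $s$-integration into $[0,t_0-\delta]$, where $N_\Phi$ is $C^1$ with bounded derivative $\bar{\nu}_\Phi$ so that differentiation under the integral is straightforward even for $u'\in L^1$, and $[t_0-\delta,t_0]$, where $u'$ is continuous on a neighbourhood of $t_0$ and the quantitative decay of $\bar{\nu}_\Phi$ near $0$ provided by the regular variation hypothesis legitimates passing to the limit in the difference quotient.
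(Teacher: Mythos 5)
Your argument is correct, and it rests on the same two pillars as the paper's proof: Stieltjes integration by parts against the monotone function $\bar{\nu}_\Phi$, and the Karamata Tauberian estimate $\bar{\nu}_\Phi(a)\sim \Phi(1/a)/\Gamma(1-\alpha)$ combined with the local Lipschitz bound $|u(t_0-a)-u(t_0)|\le Ca$ (from $C^1$ regularity near $t_0$) to kill the boundary term at the singular endpoint. The organization differs in two respects. First, the paper works with $g=u(t_0)-u$, splits $\int_0^{t_0}\bar{\nu}_\Phi(t_0-\tau)g'(\tau)d\tau$ into a piece over $[0,\varepsilon]$ and a piece over $[\varepsilon,t_0]$, integrates by parts only on the second piece to bound it by $-C(\varepsilon_0)$, and then chooses $\varepsilon$ so small that the first piece, controlled by $\bar{\nu}_\Phi(t_0-\varepsilon_0)\int_0^\varepsilon|g'|$, is at most $C/2$; you instead integrate by parts over the whole interval $[a,t_0]$ and let $a\to 0^+$, reading the sign directly off the limit identity $\int_0^{t_0}v'\bar{\nu}_\Phi=v(t_0)\bar{\nu}_\Phi(t_0)+\int_0^{t_0}v\,d\nu_\Phi\le 0$ with $v(r)=u(t_0-r)-u(t_0)\le 0$. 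This avoids the two-parameter $\varepsilon,\varepsilon_0$ bookkeeping, at the mild price of having to verify the absolute convergence of $\int_0^{t_0}v\,d\nu_\Phi$ and of $v'\bar{\nu}_\Phi$ near both endpoints, which you do via $|v(r)|\le C(r\wedge 1)$ and the integrability of $\bar{\nu}_\Phi$ at the origin. Second, you explicitly flag and justify the representation $\partial^\Phi u(t_0)=\int_0^{t_0}\bar{\nu}_\Phi(t_0-s)u'(s)\,ds$ under the actual hypothesis $u\in W^{1,1}(0,t_0)\cap C^1((0,t_0])$, via Fubini and a split differentiation of the convolution with $N_\Phi(r)=\int_0^r\bar{\nu}_\Phi$; the paper states this identity only for $u\in C^1$ and then uses it without comment, so this step of yours fills a small gap in the paper's exposition rather than duplicating it.
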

\begin{proof}
	Let us consider the function $g(\tau)=u(t_0)-u(\tau)$ for $\tau \in [0,T]$ and observe that $\partial^\Phi g(t_0)=-\partial^\Phi f(t_0)$. Fix $\varepsilon>0$ and write
	\begin{equation*}
	\partial^\Phi f(t_0)=\int_0^\varepsilon \bar{\nu}_\Phi(t_0-\tau)g'(\tau)d\tau+\int_\varepsilon^{t_0} \bar{\nu}_\Phi(t_0-\tau)g'(\tau)d\tau:=I_1+I_2.
	\end{equation*}
	Let us first consider $I_2$. By a change of variable we have
	\begin{equation*}
	I_2=\int_0^{t_0-\varepsilon} \bar{\nu}_\Phi(z)g'(t_0-z)dz.
	\end{equation*}
	Since we know that $g \in C^1([\varepsilon,t_0])$ for any $\varepsilon>0$, we can use dominated convergence theorem to write
	\begin{equation*}
	I_2=-\lim_{a \to 0}\int_a^{t_0-\varepsilon} \bar{\nu}_\Phi(z)dg(t_0-z).
	\end{equation*}
	Let us observe that $\bar{\nu}_\Phi$ is monotone and finite in $[a,t_0-\varepsilon]$, hence it is of bounded variation and we can use integration by parts for functions of bounded variation (see \cite{wheeden2015measure}) to obtain
	\begin{equation*}
	\int_a^{t_0-\varepsilon} \bar{\nu}_\Phi(z)dg(t_0-z)=\bar{\nu}_\Phi(t_0-\varepsilon)g(\varepsilon)-\bar{\nu}_\Phi(a)g(t_0-a)-\int_a^{t_0}g(t_0-\tau)d\nu_\Phi(\tau).
	\end{equation*}
 However, since $g(t_0)=0$ and $g \in C^1([\varepsilon,t_0])$, we know, by Lipschitz property on $[\varepsilon,t_0]$ for any $\varepsilon>0$, being the derivative of $g$ bounded in $[\varepsilon,t_0]$, that there exists a constant $K(\varepsilon)$ such that, for $a \in (0,t_0-\varepsilon)$,
	\begin{equation*}
	g(t_0-a)=g(t_0-a)-g(t_0)\le K(\varepsilon)|a|.
	\end{equation*}
	On the other hand, by Karamata's Tauberian theorem, since the Laplace transform of $\bar{\nu}_\Phi$ is actually $\Phi(\lambda)$, that is regularly varying by hypotheses, we know that $\bar{\nu}_\Phi(t)\sim \frac{\Phi(1/t)}{\Gamma(1-\alpha)}$ as $t \to 0^+$. Hence we have that $\lim_{a \to 0}\bar{\nu}_\Phi(a)g(t_0-a)=0$. Moreover, $g$ is non negative, hence, by monotone convergence theorem, we have
	\begin{equation*}
	I_2=-\bar{\nu}_\Phi(t_0-\varepsilon)g(\varepsilon)-\int_0^{t_0-\varepsilon} g(t_0-\tau)d\nu_\Phi(\tau)\le -\int_0^{t_0-\varepsilon} g(t_0-\tau)d\nu_\Phi(\tau).
	\end{equation*}
	Now fix $\varepsilon_0>0$ and suppose $\varepsilon<\varepsilon_0$. Then $t_0-\varepsilon>t_0-\varepsilon_0$ and, since the integrand is non negative,
	\begin{equation*}
	\int_0^{t_0-\varepsilon} g(t_0-\tau)d\nu_\Phi(\tau) \ge \int_0^{t_0-\varepsilon_0} g(t_0-\tau)d\nu_\Phi(\tau)=:C.
	\end{equation*}
	Thus, for any $\varepsilon \in (0,\varepsilon_0)$, it holds
	\begin{equation*}
	I_2\le -\int_0^{t_0-\varepsilon} g(t_0-\tau)d\nu_\Phi(\tau)\le -C.
	\end{equation*}
	Now let us focus on $I_1$. We have
	\begin{equation*}
	I_1\le \bar{\nu}_\Phi(t_0-\varepsilon_0)\int_0^\varepsilon |g'(\tau)|d\tau.
	\end{equation*}
	Since $g' \in L^1(0,\varepsilon)$, we know there exists $\varepsilon<\varepsilon_0$ such that $I_1 \le \frac{C}{2}$. Thus, choosing $\varepsilon>0$ as mentioned, we finally obtain
	\begin{equation*}
	\partial^\Phi g(t_0)\le -\frac{C}{2}\le 0,
	\end{equation*}
	concluding the proof.
\end{proof}
\begin{rmk}
	Let us observe that we do not actually need $\Phi$ to be regularly varying. We can prove the same result if $\Phi$ is a complete Bernstein function (hence $\nu_\Phi$ is absolutely continuous) and there exist $\alpha \in (0,1)$, $C\ge 1$ and $\ell:[1,+\infty)\to [0,+\infty)$ a slowly varying functions at infinity such that for any $\lambda \ge 1$ it holds
	\begin{equation*}
	\frac{1}{C}\le \frac{\Phi(\lambda)}{\lambda^\alpha \ell(\lambda)}\le C.
	\end{equation*}
	Indeed, by \cite[Theorem $2.10$]{kim2012potential}, we know that this is enough to achieve the needed bound on $\bar{\nu}_\Phi$.\\
	In general, for the previous proposition to hold, it is sufficient that $\lim_{t \to 0^+}t\bar{\nu}_{\Phi}(t)=0$.
\end{rmk}
Finally, let us recall that if $u:[0,+\infty) \to \R$ and $\partial^\Phi u$ are Laplace transformable, then
\begin{equation*}
	\cL[\partial^\Phi u](\lambda)=\Phi(\lambda)\cL[u](\lambda)-\frac{\Phi(\lambda)}{\lambda}u(0^+)
\end{equation*}
for any $\lambda$ such that the real part of $\lambda$ is greater or equal than the maximum of the abscissae of convergence of the Laplace transforms of $u$ and $\partial^\Phi u$.
\section{The time-changed fractional Ornstein-Uhlenbeck process}\label{Sec2}
Let $(\Omega, \mathcal{F}, P)$ be a complete probability space supporting all the stochastic processes that will be considered below. Let us fix Hurst index $H \in \left(\frac{1}{2},1\right)$  and consider a fractional Brownian motion $B^H=\{B^H(t), t\ge 0\}$ with Hurst index $H$, i.e. a centered Gaussian process with covariance function given by $$\E[B^H(t)B^H(s)]=1/2(t^{2H}+s^{2H}-|t-s|^{2H}), s,t \in \mathbb{R}^+.$$
Let us also fix some number $\theta>0$ and introduce the fractional Ornstein-Uhlenbeck process (defined in \cite{cheridito2003fractional}) as
\begin{equation*}
U_H(t)=e^{-\frac{t}{\theta}}\int_0^{t}e^{\frac{s}{\theta}}dB^H(s), t\ge 0.
\end{equation*}
Now, as done in \cite{ascione2019time} we can define the time-changed fractional Ornstein-Uhlenbeck process by considering a fractional Ornstein-Uhlenbeck process $U_H(t)$, together with  an independent inverse subordinator $E_\Phi(t)$, and   defining
\begin{equation*}
U_{H,\Phi}(t):=U_H(E_\Phi(t)).
\end{equation*}
Since it will be useful in what follows, let us recall the expression of the variance of $U_{H}(t)$:
$$V_{2,H}(t)=e^{-2\frac{t}{\theta}}\int_0^{t}\int_0^{t}e^{\frac{v+u}{\theta}}|u-v|^{2H-2}dudv, t\ge 0.$$ Moreover, let us state some properties of $V_{2n,H,\Phi}(t):=\E[|U_{H,\Phi}(t)|^{2n}]$ (see \cite[Lemma $3.1$]{ascione2019time}).
\begin{prop}
	\begin{itemize}
		\item [$(i)$] $V_{2n,H,\Phi}(t)$ is finite for any $t>0$ and $n \in \N$.
		\item [$(ii)$] It holds that
		\begin{equation*}
		V_{2n,H,\Phi}(t)=\int_0^{+\infty}V_{2n,H}(s)f_\alpha(s,t)ds.
		\end{equation*}
			\item [$(iii)$] $V_{2n,H,\Phi}(t)$ is increasing in $t$ for any $n \in \N$ and
		\begin{equation*}
		\lim_{t \to +\infty}V_{2n,H,\Phi}(t)=V_{2n,H}(\infty)=\frac{\left(2\theta^{2H}H\Gamma(2H)\right)^n\Gamma\left(\frac{2n+1}{2}\right)}{\sqrt{\pi}}.
		\end{equation*}
	\end{itemize}
\end{prop}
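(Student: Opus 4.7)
The plan is to reduce everything to the Gaussianity of $U_H(s)$ and to standard properties of the inverse subordinator, exploiting the independence of $U_H$ from $E_\Phi$. Conditioning on $E_\Phi(t)$ against its density $f_\Phi(s,t)$ immediately gives the subordination identity of item (ii):
\[
V_{2n,H,\Phi}(t)=\E\bigl[V_{2n,H}(E_\Phi(t))\bigr]=\int_0^{+\infty}V_{2n,H}(s)\,f_\Phi(s,t)\,ds.
\]
Since $U_H(s)\sim\cN(0,V_{2,H}(s))$, Gaussianity immediately yields the moment identity $V_{2n,H}(s)=c_n\,V_{2,H}(s)^n$, where $c_n=\E[|Z|^{2n}]=2^n\Gamma\!\left(\tfrac{2n+1}{2}\right)/\sqrt{\pi}$ for a standard Gaussian $Z$. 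In this way item (i) is reduced to showing that $V_{2,H}$ is bounded on $[0,+\infty)$.

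To prove this boundedness, together with the stronger fact that $V_{2,H}$ is non-decreasing (which will be used in (iii)), I would perform the change of variables $x=t-u$, $y=t-v$ inside the double integral defining $V_{2,H}(t)$. This is a measure-preserving bijection of $[0,t]^2$ with itself which transforms the integrand into $e^{(2t-x-y)/\theta}|x-y|^{2H-2}$, so the prefactor $e^{-2t/\theta}$ cancels and, up to the constant $H(2H-1)$ coming from the fBm--It\^o isometry,
\[
V_{2,H}(t)=H(2H-1)\int_0^t\!\int_0^t e^{-(x+y)/\theta}|x-y|^{2H-2}\,dx\,dy.
\]
The integrand is nonnegative, so $V_{2,H}$ is non-decreasing in $t$; sending $t\to+\infty$, swapping the order of integration and reducing to a Gamma integral gives $V_{2,H}(\infty)=\theta^{2H}H\Gamma(2H)$, from which $V_{2n,H}(s)\leq c_n V_{2,H}(\infty)^n<+\infty$ and hence (i).

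For (iii), monotonicity follows at once from the previous step: $V_{2n,H}=c_n V_{2,H}^n$ is non-decreasing, and since $t\mapsto E_\Phi(t)$ has non-decreasing paths by definition, $V_{2n,H}(E_\Phi(t))$ is pathwise non-decreasing in $t$; taking expectations preserves the inequality. For the limit, I would use that $E_\Phi(t)\to+\infty$ almost surely as $t\to+\infty$ (since $\sigma_\Phi$ is a.s.\ finite and diverges to infinity) and then invoke monotone convergence to conclude
\[
\lim_{t\to+\infty}V_{2n,H,\Phi}(t)=c_n\,V_{2,H}(\infty)^n=\frac{\bigl(2\theta^{2H}H\Gamma(2H)\bigr)^{n}\Gamma\!\left(\tfrac{2n+1}{2}\right)}{\sqrt{\pi}},
\]
where the factor $2^n$ inside the parenthesis comes from $c_n$. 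The whole argument is rather mechanical once the right change of variables is spotted; the only step demanding a bit of care is that single computation, which simultaneously yields boundedness, monotonicity and the explicit stationary value of $V_{2,H}$.
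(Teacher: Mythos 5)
Your argument is correct and self-contained; note that the paper itself gives no proof of this proposition (it is quoted from \cite[Lemma 3.1]{ascione2019time}), so there is nothing to compare against line by line, and your route is the natural one: conditioning on the independent time-change for (ii), the Gaussian moment identity $V_{2n,H}=c_nV_{2,H}^n$ with $c_n=2^n\Gamma\!\left(\tfrac{2n+1}{2}\right)/\sqrt{\pi}$ to reduce everything to the variance, and the reflection $u\mapsto t-u$, $v\mapsto t-v$ to exhibit $V_{2,H}(t)$ as an integral of a fixed nonnegative kernel over the growing square $[0,t]^2$, which simultaneously gives boundedness, monotonicity and the value $\theta^{2H}H\Gamma(2H)$ of the limit (your Gamma-integral computation checks out: $H(2H-1)\,\theta^{2H}\Gamma(2H-1)=\theta^{2H}H\Gamma(2H)$, using $H>\tfrac12$ for integrability of $|x-y|^{2H-2}$). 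Two remarks on bookkeeping rather than substance: you correctly reinstate the factor $H(2H-1)$ from the isometry $\E[(\int f\,dB^H)^2]=H(2H-1)\iint f(u)f(v)|u-v|^{2H-2}\,du\,dv$, which the paper's displayed formula for $V_{2,H}$ omits but which is needed for the stated limit in Lemma \ref{lem:Varlem}; and the kernel in item (ii) of the statement should be read as $f_\Phi(s,t)$ (the density of $E_\Phi(t)$, which exists under the paper's standing assumptions on $\Phi$) rather than $f_\alpha$, exactly as you use it. The final step via $E_\Phi(t)\to+\infty$ a.s.\ and monotone convergence is fine since $V_{2n,H}(E_\Phi(t))$ is pathwise non-decreasing in $t$.
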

\begin{rmk} The fact that in property $(iii)$ the asymptotic value does not depend on $\Phi$ is strictly connected to the nature of the time-change. Indeed, $E_\Phi(t)$ acts as a delay in the time-scale of $U_H(t)$, hence we expect $U_{H,\Phi}(t)$ to have the same asymptotic behaviour, despite behaving quite differently on the whole trajectories.
\end{rmk}
We will also need the following limits  for $V_{2,H}(t)$ and its derivative. They can  be obtained by \cite[Equation $29$]{ascione2019fractional} and \cite[Lemma $5.2$]{ascione2019time}.
\begin{lem}\label{lem:Varlem}
	Function $V_{2,H}$ satisfies the relations $$\lim_{t \to 0^+}\frac{V_{2,H}(t)}{t^{2H}}=1\; \text{and} \; \lim_{t \to +\infty}V_{2,H}(t)=\theta^{2H}H\Gamma(2H).$$ Moreover, $V_{2,H} \in C^1(0,+\infty)$ and its derivative satisfies the relations
	\begin{align*}
	\lim_{t \to 0^+}\frac{V'_{2,H}(t)}{t^{2H-1}}=2H\;  \text{and}\;\lim_{t \to +\infty}e^{\frac{t}{\theta}}t^{2-2H}V'_{2,H}(t)=2H(2H-1)\theta.
	\end{align*}
\end{lem}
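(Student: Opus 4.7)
The plan is to reduce everything to an explicit representation of $V_{2,H}(t)$ and to an ODE that it satisfies. Using the Wiener-type isometry for fractional Brownian motion with $H\in(1/2,1)$, one rewrites
$$V_{2,H}(t)=H(2H-1)e^{-2t/\theta}\int_0^t\int_0^t e^{(u+v)/\theta}|u-v|^{2H-2}du\,dv,$$
which already makes $V_{2,H}\in C^1(0,+\infty)$ visible via differentiation under the integral sign. For the small-$t$ asymptotic I would substitute $u=tx$, $v=ty$ to pull out a factor $t^{2H}$, and then apply dominated convergence (the exponentials tend to $1$ uniformly on $[0,1]^2$) to obtain $V_{2,H}(t)/t^{2H}\to H(2H-1)\int_0^1\int_0^1|x-y|^{2H-2}dx\,dy=1$.

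Differentiating under the integral and exploiting the symmetry of the integrand I would get
$$V'_{2,H}(t)=-\frac{2}{\theta}V_{2,H}(t)+2H(2H-1)\int_0^t e^{-s/\theta}s^{2H-2}ds,$$
after the change of variable $s=t-u$ in the boundary term. The small-$t$ limit then follows at once: the first summand is $O(t)$, while $\int_0^t e^{-s/\theta}s^{2H-2}ds\sim t^{2H-1}/(2H-1)$ as $t\to 0^+$, giving $V'_{2,H}(t)/t^{2H-1}\to 2H$. Letting $t\to+\infty$ in the same identity, I would use $\int_0^{+\infty}e^{-s/\theta}s^{2H-2}ds=\theta^{2H-1}\Gamma(2H-1)$ together with the fact (to be confirmed a posteriori) that $V'_{2,H}(t)\to 0$ to obtain $V_{2,H}(+\infty)=\theta^{2H}H(2H-1)\Gamma(2H-1)=\theta^{2H}H\Gamma(2H)$, using $\Gamma(2H)=(2H-1)\Gamma(2H-1)$.

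For the refined rate at infinity I would set $\psi(t):=V_{2,H}(+\infty)-V_{2,H}(t)$ and $\phi(t):=\int_t^{+\infty}e^{-s/\theta}s^{2H-2}ds$, so that $\psi$ satisfies the linear ODE $\psi'+(2/\theta)\psi=2H(2H-1)\phi$ with $\psi(+\infty)=0$. An integration by parts gives $\phi(t)\sim\theta e^{-t/\theta}t^{2H-2}$, and solving the ODE with the integrating factor $e^{2t/\theta}$ yields $\psi(t)\sim 2H(2H-1)\theta^2 e^{-t/\theta}t^{2H-2}$. Substituting back into $V'_{2,H}(t)=(2/\theta)\psi(t)-2H(2H-1)\phi(t)$ produces the cancellation that leaves $V'_{2,H}(t)\sim 2H(2H-1)\theta\,e^{-t/\theta}t^{2H-2}$, so that $e^{t/\theta}t^{2-2H}V'_{2,H}(t)\to 2H(2H-1)\theta$.

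The main technical obstacle is precisely this last step: the two competing terms $(2/\theta)\psi(t)$ and $2H(2H-1)\phi(t)$ have leading behaviour of the same order $e^{-t/\theta}t^{2H-2}$, so naive estimates only give $V'_{2,H}(t)=o(e^{-t/\theta}t^{2H-2})$, missing the constant. To extract the correct coefficient one has to carry the Watson/integration-by-parts expansions of both $\phi$ and the primitive appearing in the ODE solution to a consistent next order; the cancellation of the two leading terms is the delicate point, but it is a routine sharpening once the ODE above is in hand.
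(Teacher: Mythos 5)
The paper itself offers no proof of this lemma: it is imported wholesale from Equation (29) of one earlier paper of the authors and Lemma 5.2 of another, so there is nothing internal to compare against. Your self-contained derivation via the representation of $V_{2,H}$ as a double integral and the resulting linear ODE is correct, and all the numerical constants check out. Three remarks. First, your starting formula carries the prefactor $H(2H-1)$ that the paper's displayed definition of $V_{2,H}$ omits; since $\iint_{[0,1]^2}|x-y|^{2H-2}\,dx\,dy=1/(H(2H-1))$, only the version with the prefactor (i.e.\ the genuine variance of $U_H(t)$, obtained from the isometry $\E[\int f\,dB^H\int g\,dB^H]=H(2H-1)\iint f(u)g(v)|u-v|^{2H-2}\,du\,dv$) is consistent with the stated limits, so you have in effect corrected a typo in the paper. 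Second, the ``delicate cancellation'' you flag at the end is not actually there: your own asymptotics give $\tfrac{2}{\theta}\psi(t)\sim 4H(2H-1)\theta\, e^{-t/\theta}t^{2H-2}$ and $2H(2H-1)\phi(t)\sim 2H(2H-1)\theta\, e^{-t/\theta}t^{2H-2}$, and since the leading coefficients $4H(2H-1)\theta$ and $2H(2H-1)\theta$ are distinct, the difference is immediately asymptotic to $2H(2H-1)\theta\, e^{-t/\theta}t^{2H-2}$; next-order Watson expansions would be needed only if the leading coefficients coincided, which they do not because $H>1/2$. Third, the one place to tighten the write-up is the identification of $V_{2,H}(\infty)$: rather than assuming $V'_{2,H}(t)\to 0$ ``a posteriori'', solve the linear ODE explicitly as $V_{2,H}(t)=e^{-2t/\theta}\int_0^t e^{2s/\theta}G(s)\,ds$ with $G(t)=2H(2H-1)\int_0^t e^{-s/\theta}s^{2H-2}\,ds$, and apply L'H\^{o}pital (or a direct estimate) to get $V_{2,H}(t)\to\tfrac{\theta}{2}G(\infty)=H\theta^{2H}\Gamma(2H)$; this removes any appearance of circularity. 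With those cosmetic fixes your argument is a complete proof, and arguably more useful to the reader than the paper's bare citations.
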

In \cite{ascione2019time} we introduced some other operators linked to the fractional Ornstein-Uhlenbeck process and its time-change. First of all, we needed to introduce a weighted Laplace transform
\begin{equation*}
L_Hu(\lambda)=\cL_{t \to \lambda}[V_{2,H}'(t)u(t)](\lambda), \ \lambda \in \bH,
\end{equation*}
where $\bH=\{\lambda \in \C: \ \Re(\lambda)>0\}$. On the other hand, by using a \textit{mixed} inversion technique, we expressed $L_H$ in an alternative form and then defined the operator $\widehat{L}_H$ as
\begin{multline}\label{defhatL}
\widehat{L}_{H,\Phi}v(x,\lambda)=\frac{1}{4\pi^2}\int_0^{+\infty}e^{-\lambda^\alpha t}\\\times \lim_{R \to +\infty}\int_{-\infty}^{+\infty}e^{(c_1+iw)t}\int_{-R}^{R}\cL_{t \to \lambda}[V'_{2,H}(t)](c_1-c_2+i(w-z))\\ \times\frac{\Phi^{-1}(c_2+iz)}{c_2+iz}v(\Phi^{-1}(c_2+iz))dzdwdt,
\end{multline}
where $c_1<0<c_2$, $c_1-c_2>-\frac{1}{\theta}$, the inverse of $\Phi$ is well-defined on the vertical line $r_{c_2}=\{\lambda \in \C: \ \lambda=c_2+iz, \ z \in \R\}$ and the function $\frac{\Phi^{-1}(c_2+iz)}{c_2+iz}v(x,\Phi^{-1}(c_2+iz))$ does not depend on the choice of the local inverse of $\Phi$ on $r_{c_2}$. Let us remark that the fact that $\Phi$ is invertible on a vertical line of the form $r_{c_2}$ can be ensured by the fact that any Bernstein function admits an holomorphic extension $\Phi:\bH \to \bH$, hence there exists at least a vertical line $r_{c_2}$ on which the derivative $\Phi'$ of $\Phi$ does never touch $0$.  The operator $\widehat{L}_{H,\Phi}$ takes into account, in a certain sense, the action of the time-change in $L_H$.  Indeed, it has been shown in \cite{ascione2019time} that
\begin{equation*}
\widehat{L}_{H,\Phi}\bar{p}_{H,\Phi}(\lambda,x)=L_Hp_H(\Phi(\lambda),x),
\end{equation*}
where $p_H(x,t)$ is the probability density function of $U_H(t)$ and $p_{H,\Phi}(x,t)$ is the probability density function of $U_{H,\Phi}(t)$ (that exists by \cite[Proposition $4.1$]{ascione2019time}). Finally, we can define the following operator
\begin{equation}\label{Fokker}
\cF_{H,\Phi}v(x,t)=\cL_{\lambda \to t}^{-1}\left[\frac{\Phi(\lambda)}{\lambda}\pdsup{}{x}{2}\widehat{L}_{H,\Phi}(\cL_{t \to \lambda}[v(x,t)])\right](t),
\end{equation}
which plays the role of the Fokker-Planck operator in this case. Indeed, denoting by $D(\cF_{H,\Phi})$ the domain of the aforementioned operator, in \cite{ascione2019time} it has been shown that $p_{H,\Phi}(x,t)$ belongs to $D(\cF_{H,\Phi})$ and solves
\begin{equation}\label{genFPpH}
\partial_t^\Phi p_{H,\Phi}(x,t)=\frac{1}{2}\cF_{H,\Phi}p_{H,\Phi}(x,t), \ t>0, \ x \in \R \setminus \{0\}.
\end{equation}
Here we want to study some further properties of the generalized Fokker-Planck equation \eqref{genFPpH}, focusing on uniqueness of solutions and gain of regularity.
\section{Subordination and weighted subordination}\label{Sec3}
Let us fix a Bernstein function $\Phi$ and let us introduce two operators that will be useful.
\begin{defn}
	Let $X$ be a real Banach space and $L^\infty(\R^+;X)$ the Banach space of measurable functions $v:[0,+\infty)\to X$ such that $$\Norm{v}{L^\infty(\R^+;X)}=\sup_{t \in (0,+\infty)}\Norm{v(t)}{X}<+\infty,$$
	where with $\sup$ we intend the essential supremum. Then we can define the \textbf{$\Phi$-subordination operator} $S_\Phi: L^\infty(\R^+;X) \mapsto L^\infty(\R^+;X)$ as
	\begin{equation*}
	S_\Phi v(t)=\E[v(E_\Phi(t))]=\int_0^{+\infty}v(s)f_\Phi(s,t)ds,
	\end{equation*}
	where the integral has to be interpreted in Bochner's sense.\\
	Moreover, let us define the \textbf{weighted $\Phi$-subordination operator} as
	\begin{equation*}
	S_{\Phi, H} v(t)=\E[V'_{2,H}(E_\Phi(t))v(E_\Phi(t))]=\int_0^{+\infty}V'_{2,H}(s)v(s)f_\Phi(s,t)ds.
	\end{equation*}
\end{defn}
First, let us establish   some basic properties of $S_\Phi$ and $S_{\Phi, H}$.
\begin{lem}
	The operators $S_{\Phi}$ and $S_{\Phi, H}$ are continuous from $L^\infty(\R^; X)$ to itself. In particular, $$\Norm{S_{\Phi}}{L(L^\infty(\R^+;X);L^\infty(\R^+;X))}\le 1$$ and
	$$\Norm{S_{\Phi, H}}{L(L^\infty(\R^+;X);L^\infty(\R^+;X))}\le \Norm{V_{2,H}'}{L^\infty(\R^+)}.$$
\end{lem}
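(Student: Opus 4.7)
My plan is to bound both operators pointwise in $t$ using the Bochner integral norm inequality, exploiting that $f_\Phi(\cdot,t)$ is a probability density for every fixed $t>0$.

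For $S_\Phi$, I would first check Bochner integrability: since $v \in L^\infty(\R^+;X)$ is strongly measurable by definition and $f_\Phi(\cdot,t) \in L^1(\R^+)$ is a nonnegative scalar density, the product $s \mapsto v(s)f_\Phi(s,t)$ is Bochner integrable on $\R^+$. Then
\begin{equation*}
\Norm{S_\Phi v(t)}{X} \le \int_0^{+\infty}\Norm{v(s)}{X} f_\Phi(s,t)\,ds \le \Norm{v}{L^\infty(\R^+;X)}\int_0^{+\infty}f_\Phi(s,t)\,ds = \Norm{v}{L^\infty(\R^+;X)},
\end{equation*}
since $\int_0^{+\infty}f_\Phi(s,t)\,ds=1$ (as $f_\Phi(\cdot,t)$ is the density of $E_\Phi(t)$). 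Taking the essential supremum in $t$ gives the claimed operator norm bound.

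For $S_{\Phi,H}$ the same scheme works once I know that $V'_{2,H}$ is bounded on $\R^+$. This is where I would use Lemma \ref{lem:Varlem}: first, $V_{2,H} \in C^1(0,+\infty)$ so $V'_{2,H}$ is continuous on any compact subinterval of $(0,+\infty)$; second, the relation $V'_{2,H}(t)/t^{2H-1}\to 2H$ as $t\to 0^+$ yields $V'_{2,H}(t)\to 0$ at $0$ (because $2H-1>0$ for $H\in(1/2,1)$), so the function extends continuously to $0$; third, $e^{t/\theta}t^{2-2H}V'_{2,H}(t)\to 2H(2H-1)\theta$ as $t\to+\infty$ implies $V'_{2,H}(t)\to 0$ at infinity. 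Hence $V'_{2,H}\in L^\infty(\R^+)$. Then an entirely analogous bound
\begin{equation*}
\Norm{S_{\Phi,H} v(t)}{X} \le \int_0^{+\infty}V'_{2,H}(s)\Norm{v(s)}{X}f_\Phi(s,t)\,ds \le \Norm{V'_{2,H}}{L^\infty(\R^+)}\Norm{v}{L^\infty(\R^+;X)}
\end{equation*}
closes the argument.

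There is no real obstacle here: the nontrivial ingredients are already provided by Lemma \ref{lem:Varlem} (boundedness of $V'_{2,H}$) and by the fact that $f_\Phi(\cdot,t)$ is a probability density, guaranteed by the standing assumptions on $\Phi$ discussed in Section \ref{Sec1}. The only care needed is in the Bochner framework, where strong measurability of $v$ together with integrability of the scalar weight gives Bochner integrability of the integrand.
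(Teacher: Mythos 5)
Your proof is correct and follows essentially the same route as the paper's: the Bochner norm inequality together with the fact that $f_\Phi(\cdot,t)$ is a probability density for each $t>0$. The only cosmetic difference is that the paper treats $S_{\Phi,H}$ by factoring it as $S_\Phi$ composed with multiplication by $V'_{2,H}$, whereas you bound it directly and additionally spell out (via Lemma \ref{lem:Varlem}) why $V'_{2,H}\in L^\infty(\R^+)$ — a point the paper leaves implicit — but both arguments reduce to the same estimate.
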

\begin{proof}
Let us observe that $S_{\Phi,H}v=S_\Phi(V'_{2,H}v)$. Thus we only have to prove the property for $S_\Phi$, since $S_{\Phi,H}$ is the composition of $S_\Phi$ with a multiplication operator. In particular, by Bochner's Theorem (see \cite{arendt}), we have
\begin{equation*}
\Norm{S_\Phi v(t)}{X}\le \int_0^{+\infty}\Norm{v(s)}{X}f_\Phi(s,t)ds\le \Norm{v}{L^\infty(\R^+;X)}.
\end{equation*}
Taking the supremum we conclude the proof.
\end{proof}
Now let us show how Laplace transform acts on subordinated functions. In particular, we can prove the following result.
\begin{prop}\label{Proplaptrans}
	Let $v \in L^\infty(\R^+;X)$. Then, for any $\lambda \in \bH$, it holds
	\begin{equation*}
	\cL[S_\Phi v](\lambda)=\frac{\Phi(\lambda)}{\lambda}\cL[v](\Phi(\lambda)).
	\end{equation*}
	Moreover both $S_\Phi$ and $S_{\Phi,H}$ are injective.
\end{prop}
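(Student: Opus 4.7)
The plan is to establish the Laplace transform identity first, via Fubini together with the known Laplace transform of $f_\Phi(s,t)$, and then derive injectivity of both operators from uniqueness of the vector-valued Laplace transform combined with the identity principle.

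For the identity I would unfold the definition $S_\Phi v(t)=\int_0^{+\infty}v(s)f_\Phi(s,t)ds$ inside the Laplace integral. The integrand has Bochner-norm dominated by $\Norm{v}{L^\infty(\R^+;X)}e^{-\Re(\lambda)t}f_\Phi(s,t)$, which integrates to $\Norm{v}{L^\infty(\R^+;X)}/\Re(\lambda)$ since $f_\Phi(\cdot,t)$ is a probability density, justifying Fubini for Bochner integrals. The inner Laplace transform $\cL_{t\to\lambda}[f_\Phi(s,t)]=\frac{\Phi(\lambda)}{\lambda}e^{-s\Phi(\lambda)}$ from \eqref{laplace} then pulls the factor $\Phi(\lambda)/\lambda$ outside and leaves $\cL[v](\Phi(\lambda))$, which is well-defined because $\Re(\Phi(\lambda))>0$ for $\lambda\in\bH$: the integral representation of $\Phi$ together with $a=b=0$ and $\nu_\Phi(0,+\infty)=+\infty$ forces $\Phi(\bH)\subseteq\bH$.

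For injectivity of $S_\Phi$, I would argue that if $S_\Phi v\equiv 0$ then the identity and the nowhere-vanishing of $\Phi(\lambda)/\lambda$ on $\bH$ give $\cL[v]\equiv 0$ on the set $\Phi(\bH)$. Restricting $\lambda$ to $(0,+\infty)$, $\Phi$ is continuous and strictly increasing (again by $a=b=0$ and $\nu_\Phi(0,+\infty)=+\infty$), so $\Phi((0,+\infty))$ is a non-degenerate interval on the positive real axis; since $\cL[v]\colon\bH\to X$ is holomorphic, composition with functionals in $X^{\ast}$ and the scalar identity principle force $\cL[v]\equiv 0$ on $\bH$, and uniqueness of the Bochner Laplace transform then yields $v=0$ a.e. The injectivity of $S_{\Phi,H}$ follows via the factorization $S_{\Phi,H}v=S_\Phi(V'_{2,H}v)$ already noted in the preceding lemma: Lemma \ref{lem:Varlem} combined with the continuity of $V'_{2,H}$ on $(0,+\infty)$ shows that $V'_{2,H}\in L^\infty(\R^+)$ (both limits at $0^+$ and $+\infty$ are finite since $H>\tfrac{1}{2}$), so $V'_{2,H}v\in L^\infty(\R^+;X)$, and the same asymptotic relations together with the strict monotonicity of the Gaussian variance $V_{2,H}$ give $V'_{2,H}>0$ a.e., so that $V'_{2,H}v=0$ a.e.\ forces $v=0$ a.e.

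The main obstacle I anticipate is the identity-principle step for the vector-valued holomorphic function $\cL[v]$, which is standard but requires a Hahn-Banach reduction to scalars and verification that $\Phi((0,+\infty))$ really is a genuine interval with accumulation points in $\bH$; the remainder is routine Fubini bookkeeping and elementary facts about Bernstein functions.
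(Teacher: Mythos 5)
Your proposal is correct and follows essentially the same route as the paper: Fubini (justified by the $L^\infty$ bound and the fact that $f_\Phi(\cdot,t)$ is a probability density) combined with \eqref{laplace} for the transform identity, and the factorization $S_{\Phi,H}v=S_\Phi(V'_{2,H}v)$ with positivity of $V'_{2,H}$ for the second injectivity claim. The only cosmetic difference is in the injectivity of $S_\Phi$: the paper observes that $\Phi$ restricted to $(0,+\infty)$ is continuous, invertible, and tends to $0$ and $+\infty$ at the endpoints, hence maps onto all of $(0,+\infty)$, so $\cL[v]$ vanishes on the whole positive half-line and uniqueness of the Laplace transform applies directly, sparing you the Hahn--Banach/identity-principle detour you anticipated as the main obstacle.
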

\begin{proof}
	Let us suppose, without loss of generality, that $\lambda \in \R^+$. Then we have
	\begin{equation*}
	\cL[S_\Phi v](\lambda)=\int_0^{+\infty}e^{-t\lambda}\int_0^{+\infty}v(s)f_\Phi(s,t)dsdt.
	\end{equation*}
	Now we want to use Fubini's theorem. To do this, let us observe that
	\begin{equation*}
	\int_0^{+\infty}\int_0^{+\infty}e^{-t\lambda}\Norm{v(s)}{X}f_\Phi(s,t)dtds\le \frac{\Norm{v}{L^\infty(\R^+;X)}}{\lambda}.
	\end{equation*}
	Thus, by using Fubini's theorem and equation \eqref{laplace}, we obtain
	\begin{align*}
	\cL[S_\Phi v](\lambda)&=\int_0^{+\infty}e^{-t\lambda}\int_0^{+\infty}v(s)f_\Phi(s,t)dsdt\\
	&=\int_0^{+\infty}v(s)\int_0^{+\infty}e^{-t\lambda}f_\Phi(s,t)dtds\\
	 &=\frac{\Phi(\lambda)}{\lambda}\int_0^{+\infty}e^{-s\Phi(\lambda)}v(s)ds=\frac{\Phi(\lambda)}{\lambda}\cL[v](\Phi(\lambda)),
	\end{align*}
	that is well defined since $\Phi:\bH \to \bH$.\\
	Now let us show the injectivity of $S_\Phi$. Since $S_\Phi$ is a linear operator, we have only to show that ${\rm Ker}S_\Phi=\{0\}$. Thus, let us suppose that $S_\Phi v=0$. Then we have
	\begin{equation*}
	\frac{\Phi(\lambda)}{\lambda}\cL[v](\Phi(\lambda))=0
	\end{equation*}
	that implies, since $\Phi(\lambda)>0$ for any $\lambda>0$, $\cL[v](\Phi(\lambda))=0$ for any $\lambda>0$. However, we have that $\Phi:\R^+ \to \R^+$ is $C^\infty(0,+\infty)$, invertible and such that $\lim_{\lambda \to 0^+}\Phi(\lambda)=0$ and $\lim_{\lambda \to +\infty}\Phi(\lambda)=+\infty$. Thus, if we choose $\lambda=\Phi^{-1}(\eta)$, we have that $\cL[v](\eta)=0$ for any $\eta>0$. By injectivity of the Laplace transform we conclude that $v \equiv 0$.\\
	Finally, let us show that $S_{\Phi,H}$ is injective. Let us suppose, as before, that $S_{\Phi,H}v\equiv 0$. Then, since $S_{\Phi,H}v=S_{\Phi}(V'_{2,H}v)$, we have that $V'_{2,H}v \equiv 0$. However, since $V'_{2,H}(t)>0$ for any $t \in (0,+\infty)$, this implies that $v \equiv 0$, concluding the proof.
\end{proof}
We can also easily exploit the link between the weighted Laplace transform and the weighted subordination operator, by means of the following Corollary.
\begin{cor}
	Let $v \in L^\infty(\R^+;X)$. Then, for any $\lambda \in \bH$, it holds
	\begin{equation*}
	\cL[S_{\Phi,H} v](\lambda)=\frac{\Phi(\lambda)}{\lambda}L_H v(\Phi(\lambda)).
	\end{equation*}
\end{cor}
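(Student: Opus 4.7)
The plan is to reduce this corollary to Proposition~\ref{Proplaptrans} by exploiting the factorization $S_{\Phi,H}v = S_\Phi(V'_{2,H}\cdot v)$ that was already used in the proof of the preceding lemma. The first step is to check that the product $V'_{2,H}\cdot v$ still belongs to $L^\infty(\R^+;X)$, so that Proposition~\ref{Proplaptrans} is legitimately applicable. This reduces to the boundedness of $V'_{2,H}$ on $(0,+\infty)$, which follows from Lemma~\ref{lem:Varlem}: the asymptotic $V'_{2,H}(t)/t^{2H-1}\to 2H$ as $t\to 0^+$ together with $H\in(1/2,1)$ (so $2H-1>0$) gives boundedness near zero, while the asymptotic $e^{t/\theta}t^{2-2H}V'_{2,H}(t)\to 2H(2H-1)\theta$ yields exponential decay at infinity, so $V'_{2,H}\in L^\infty(\R^+)$.

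With that in hand the computation becomes one line. Applying Proposition~\ref{Proplaptrans} to the function $V'_{2,H}\cdot v \in L^\infty(\R^+;X)$ gives, for $\lambda\in\bH$,
\begin{equation*}
\cL[S_{\Phi,H}v](\lambda) = \cL[S_\Phi(V'_{2,H}v)](\lambda) = \frac{\Phi(\lambda)}{\lambda}\,\cL[V'_{2,H}v](\Phi(\lambda)),
\end{equation*}
and by the very definition $L_Hv(\mu)=\cL_{t\to\mu}[V'_{2,H}(t)v(t)](\mu)$, the last factor is exactly $L_Hv(\Phi(\lambda))$. Evaluating $L_H$ at $\Phi(\lambda)$ is well-posed because $\Phi$ maps $\bH$ holomorphically into itself, which was already noted in the excerpt.

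There is essentially no hard step here: the work was done in Proposition~\ref{Proplaptrans}, and this corollary is purely a dressing of that identity with the weight $V'_{2,H}$. The only ingredient that a careful reader might flag is the $L^\infty$ hypothesis for the weighted argument; once that is verified via Lemma~\ref{lem:Varlem}, the identity drops out immediately.
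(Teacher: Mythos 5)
Your proposal is correct and follows exactly the route of the paper's own (one-line) proof: factor $S_{\Phi,H}v=S_\Phi(V'_{2,H}v)$, apply Proposition \ref{Proplaptrans}, and invoke the definition of $L_H$. The extra verification that $V'_{2,H}\in L^\infty(\R^+)$ via Lemma \ref{lem:Varlem} is a welcome bit of care that the paper leaves implicit (it is already used in the norm bound for $S_{\Phi,H}$), but it does not change the argument.
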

\begin{proof}
	It easily follows from Proposition \ref{Proplaptrans}, the fact that $S_{\Phi,H}v=S_\Phi(V'_{2,H}v)$ and the definition of $L_H$.
\end{proof}
Let us stress out that we have proved in \cite[Proposituion $4.1$]{ascione2019time} that $p_{H,\Phi}=S_\Phi p_H$, where we can consider $X=C^2(I)$ for any compact interval $I \subseteq \R \setminus \{0\}$.
\subsection{Subordinated functions and the operator $\widehat{L}_{\Phi,H}$}
In this subsection we want to stress the link between subordinated functions and the operator $\widehat{L}_{\Phi,H}$. To do that, we first need the following Proposition.
\begin{prop}\label{altformLh}
	Fix $c_1<0<c_2$ with $c_1-c_2>-1/\theta$. Consider $v \in L^\infty(\R^+;X)$ and suppose one of the following properties holds:
	\begin{itemize}
		\item[$(a)$] $v$ is Lipschitz and $x \in \R \mapsto \cL[v](c_2+ix)$ belongs to $L^1(\R)$;
		\item[$(b)$] $v$ belongs to $L^2(\R^+)$ and $x \in \R \mapsto \cL[v](c_2+ix)$ belongs to $L^2(\R)$.
	\end{itemize}
Then it holds
\begin{multline}\label{altL}
L_Hv(x,\lambda)=\frac{1}{4\pi^2}\int_0^{+\infty}e^{-\lambda^\alpha t}\\\times \lim_{R \to +\infty}\int_{-\infty}^{+\infty}e^{(c_1+iw)t}\int_{-R}^{R}\cL_{t \to \lambda}[V'_{2,H}(t)](c_1-c_2+i(w-z))\\ \times \cL[v](c_2+iz)dzdwdt.
\end{multline}
\end{prop}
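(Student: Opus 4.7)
The content of formula \eqref{altL} is an iterated Bromwich/Fourier inversion identity. My plan is to start from the definition $L_H v(\lambda) = \int_0^\infty e^{-\lambda t} V'_{2,H}(t)v(t)\,dt$ and express each of the two factors in the integrand via an inverse Laplace transform along a vertical line. Multiplying the two Bromwich representations and performing a suitable change of variables in the resulting double integral will produce exactly the form appearing on the right-hand side of \eqref{altL}, after applying Fubini to swap the order of integration.

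More concretely, for $t > 0$ I would write $v(t) = \frac{1}{2\pi}\int_\R e^{(c_2+iz)t}\cL[v](c_2+iz)\,dz$ (Bromwich inversion along $r_{c_2}$, valid since $v \in L^\infty(\R^+;X)$ has abscissa of convergence at most $0 < c_2$) and
\begin{equation*}
V'_{2,H}(t) = \frac{1}{2\pi}\int_\R e^{(c_1-c_2+i\mu)t}\cL[V'_{2,H}](c_1-c_2+i\mu)\,d\mu,
\end{equation*}
where the Bromwich inversion is along $r_{c_1-c_2}$; this is allowed precisely because the hypothesis $c_1 - c_2 > -1/\theta$ places the line to the right of the abscissa of convergence of $\cL[V'_{2,H}]$, which is $-1/\theta$ by the exponential decay $V'_{2,H}(t)\sim t^{2H-2}e^{-t/\theta}$ from Lemma \ref{lem:Varlem}. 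Multiplying these representations and substituting $w = \mu + z$ (for fixed $z$) yields
\begin{equation*}
V'_{2,H}(t)v(t) = \frac{1}{4\pi^2}\iint_{\R^2} e^{(c_1+iw)t}\cL[V'_{2,H}](c_1-c_2+i(w-z))\cL[v](c_2+iz)\,dw\,dz,
\end{equation*}
and swapping the order of $dw$ and $dz$ by Fubini, then multiplying by $e^{-\lambda t}$ and integrating in $t$, will produce \eqref{altL} directly.

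The non-trivial content is the justification of these formal manipulations under the two sets of hypotheses. Under case (a), the Lipschitz regularity of $v$ combined with $\cL[v](c_2+i\cdot) \in L^1(\R)$ makes the Bromwich formula for $v$ absolutely convergent and valid pointwise by standard (Fej\'er/Jordan) criteria; the asymptotics of $V'_{2,H}$ ensure that $\cL[V'_{2,H}](c_1-c_2+i\cdot)$ is bounded and continuous, and Fubini applies by absolute integrability so the $\lim_R$ is superfluous. Under case (b), $\cL[v](c_2+i\cdot) \in L^2(\R)$ need not be integrable and the Bromwich formula for $v$ must be read as an $L^2$-limit, $v(t) = \lim_R \frac{1}{2\pi}\int_{-R}^R e^{(c_2+iz)t}\cL[v](c_2+iz)\,dz$ holding pointwise a.e. along a subsequence; this is exactly the principal-value truncation appearing in the statement. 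Here I would invoke Plancherel: the function $s \mapsto V'_{2,H}(s)e^{-(c_1-c_2)s}\mathbf{1}_{\{s>0\}}$ is in $L^2(\R^+)$ (by Lemma \ref{lem:Varlem} together with $c_1-c_2 > -1/\theta$), so $\cL[V'_{2,H}](c_1-c_2+i\cdot)\in L^2(\R)$ and the $z$-convolution produces a bounded continuous function of $w$ that is amenable to Plancherel-based manipulation.

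The main obstacle is exactly this justification under (b): there is no single Fubini step that applies globally, and one must interpret the truncated $z$-integral, the $w$-integral, and the outer $t$-integral in compatible senses so that the $\lim_R$ can be pulled through all the way outside. I would address this by first verifying the identity for a dense class of test functions (for instance, by truncating the $\cL[v](c_2+i\cdot)\in L^2$ to compact intervals in $z$, which puts us back into an absolutely-integrable setting covered by the case (a) argument), and then passing to the $L^2$-limit using Plancherel and the boundedness of the weighted subordination-type operators introduced in Section \ref{Sec3}.
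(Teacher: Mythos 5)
The paper itself contains no proof of this proposition: it is stated with the remark that the argument coincides with that of Proposition $6.6$ of the authors' earlier work, so there is no in-text proof to compare yours against. Your route --- expand $V'_{2,H}(t)$ and $v(t)$ by Bromwich inversion along $r_{c_1-c_2}$ and $r_{c_2}$ respectively, multiply, substitute $w=\mu+z$ so the exponents combine into $e^{(c_1+iw)t}$, swap the $(z,w)$ integrals, and integrate against $e^{-\lambda t}$ --- is the natural derivation and almost certainly the intended one; in particular your identification of $-1/\theta$ as the abscissa of convergence of $\cL[V'_{2,H}]$ via Lemma \ref{lem:Varlem} is exactly the role of the hypothesis $c_1-c_2>-1/\theta$, and the constants $\frac{1}{4\pi^2}$ match.

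Three points need attention. First, for the unrestricted $w$-integral in \eqref{altL} to converge absolutely (so that Fubini applies for fixed $R$ and $t$, and the $w$-integral can be recognized as the Bromwich inversion of $V'_{2,H}$) you need $\mu\mapsto\cL[V'_{2,H}](c_1-c_2+i\mu)$ to lie in $L^1(\R)$, not merely to be bounded, continuous and in $L^2$ as you assert; this does hold (the $t^{2H-1}$ behaviour of $V'_{2,H}$ at $0^+$ from Lemma \ref{lem:Varlem} forces decay of order $|\mu|^{-2H}$ with $2H>1$), but it is the hypothesis actually used and must be verified. Second, and more substantively, in case $(b)$ the claim that the truncated inversion of $v$ converges ``pointwise a.e.\ along a subsequence'' does not identify the limit $\lim_{R\to+\infty}$ appearing in \eqref{altL}, which is over the full family of symmetric truncations; to conclude you need either pointwise a.e.\ convergence of the symmetric partial inversion of an $L^2$ boundary function (Carleson--Hunt) or a localization argument, and your proposed density-plus-Plancherel scheme as written only delivers convergence in $L^2_{\mathrm{loc}}(dt)$, after which exchanging the limit with the outer $t$-integral still requires a domination step that you do not supply. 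Third, you silently replace the factor $e^{-\lambda^\alpha t}$ of \eqref{altL} by $e^{-\lambda t}$: this is the only reading under which the right-hand side can equal $L_Hv(\lambda)$ (the exponent $\lambda^\alpha$ is evidently inherited from the $\alpha$-stable specialization of \eqref{defhatL}), so that correction should be made explicit rather than assumed.
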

We omit the proof since it is the same of \cite[Proposition $6.6$]{ascione2019time}.
%
%
By using this alternative representation of the operator $L_H$, we can stress out how it works on subordinated functions.
\begin{prop}\label{prop:hatnonhat}
	Let $v_\Phi=S_\Phi v$ where $v \in L^\infty(\R^+;X)$ satisfies one of hypotheses $(a)$ or $(b)$ of the previous Proposition. Then for any $\lambda \in \bH$ it holds
	\begin{equation*}
	\widehat{L}_{\Phi,H}(\cL[v_\Phi])(\lambda)=L_H v(\Phi(\lambda)).
	\end{equation*}
\end{prop}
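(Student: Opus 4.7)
The plan is to prove the identity by pure substitution, turning the definition \eqref{defhatL} of $\widehat{L}_{\Phi,H}$ applied to $\cL[v_\Phi]$ into the alternative representation \eqref{altL} of $L_H$ evaluated at $\Phi(\lambda)$, with the help of the explicit form of $\cL[v_\Phi]$ that was computed in Proposition \ref{Proplaptrans}.

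The key algebraic step is the following. By Proposition \ref{Proplaptrans}, for every $\mu \in \bH$ one has
\begin{equation*}
\cL[v_\Phi](\mu) \;=\; \frac{\Phi(\mu)}{\mu}\,\cL[v](\Phi(\mu)).
\end{equation*}
Setting $\mu = \Phi^{-1}(c_2+iz)$, where $\Phi^{-1}$ denotes a local inverse on the vertical line $r_{c_2}$, and using $\Phi(\Phi^{-1}(c_2+iz))=c_2+iz$, I get
\begin{equation*}
\frac{\Phi^{-1}(c_2+iz)}{c_2+iz}\,\cL[v_\Phi]\!\bigl(\Phi^{-1}(c_2+iz)\bigr) \;=\; \cL[v](c_2+iz).
\end{equation*}
This is precisely the combination that appears in the innermost factor of \eqref{defhatL}. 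Note that the RHS is manifestly independent of the local branch chosen for $\Phi^{-1}$, which is consistent with the branch-independence condition stipulated after \eqref{defhatL}.

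Plugging this identity into \eqref{defhatL} converts the triple integral defining $\widehat{L}_{\Phi,H}(\cL[v_\Phi])(\lambda)$ into the triple integral in \eqref{altL} with $\lambda$ in \eqref{altL} replaced by $\Phi(\lambda)$. Since $v$ is assumed to satisfy hypothesis $(a)$ or $(b)$, Proposition \ref{altformLh} applies and gives that this integral equals $L_H v(\Phi(\lambda))$, which is the desired identity. The only delicate point is the applicability of Proposition \ref{altformLh}, i.e.\ ensuring that $\cL[v](c_2+i\cdot)$ has the required integrability so that \eqref{altL} is a legitimate representation of $L_H v$; but this is exactly the content of assumption $(a)$ or $(b)$ on $v$, so no additional work is needed. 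The main (mild) obstacle is therefore bookkeeping: checking that every substitution respects the domains of definition of $\Phi^{-1}$ and of the Laplace transforms involved, which follows from the setup $c_1<0<c_2$, $c_1-c_2>-1/\theta$ inherited from the definition of $\widehat{L}_{\Phi,H}$.
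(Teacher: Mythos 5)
Your proof is correct and follows essentially the same route as the paper's: both invoke Proposition \ref{Proplaptrans} to get $\cL[v_\Phi](\mu)=\frac{\Phi(\mu)}{\mu}\cL[v](\Phi(\mu))$, specialize to $\mu=\Phi^{-1}(c_2+iz)$ to identify the inner factor of \eqref{defhatL} with $\cL[v](c_2+iz)$, and then recognize the resulting expression as the representation \eqref{altL} of $L_Hv$ at $\Phi(\lambda)$ guaranteed by Proposition \ref{altformLh}. Your remarks on branch-independence and on why hypotheses $(a)$/$(b)$ are needed are a slight (and welcome) elaboration of what the paper leaves implicit.
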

\begin{proof}
	Let us observe that, by Proposition \ref{Proplaptrans}, it holds
	\begin{equation*}
	\cL[v_\Phi](\lambda)=\frac{\Phi(\lambda)}{\lambda}\cL[v](\Phi(\lambda))
	\end{equation*}
	for any $\lambda \in \bH$. In particular it holds for $\lambda \in \Phi^{-1}(r_{c_2})$ where $\Phi$ is invertible on $r_{c_2}$, and in particular it holds $\Phi(\lambda)=c_2+iz$ for some $z \in \R$. Thus we have
	\begin{equation*}
	\frac{\Phi^{-1}(c_2+iz)}{c_2+iz}\cL[v_\Phi](\Phi^{-1}(c_2+iz))=\cL[v](c_2+iz).
	\end{equation*}
	Applying this relation to Equation \eqref{defhatL} we obtain \eqref{altL}, concluding the proof.
\end{proof}
Let us recall that we have shown in \cite{ascione2019time} that $p_H(t,x)$ is Lipschitz in $t$, thus we are under the hypotheses of the two previous Propositions.
\subsection{Derivative of $\alpha$-subordinated function}
In general we have that $S_\Phi v(t)$ could be derivable or not regardless of the fact that the function $v$ belongs to $C^1(0,+\infty)$. Indeed, the differentiability of $S_\Phi v(t)$ with respect to $t>0$ directly depends on the differentiability of the density $f_\Phi(s,t)$ of the inverse subordinator. However, in the case of the $\alpha$-stable subordinator, we can exploit a sufficient condition for differentiability of $S_\alpha v(t)$.
\begin{prop}\label{derprop}
Let $v \in C^1(0,+\infty)$. If there exist two constants $C>0$ and $\beta \in \left(\frac{\alpha-1}{\alpha},2\right)$ such that $|v'(t)|\le C t^{-\beta}$ for $t>0$, then $S_\alpha v$ is in $C^1(0,+\infty)$ and its derivative equals
\begin{equation*}
\der{}{t}S_\alpha v(t)=\alpha t^{-1}S_\alpha(zv'(z))(t), \ t>0.
\end{equation*}
\end{prop}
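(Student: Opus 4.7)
My plan is to exploit the scaling relation \eqref{eqftog} in order to push the $t$-dependence out of the measure and into the integrand. Explicitly, set $u=ts^{-1/\alpha}$ (so $s=(t/u)^\alpha$) in
$$S_\alpha v(t)=\int_0^{+\infty}v(s)\frac{t}{\alpha}s^{-1-1/\alpha}g_\alpha(ts^{-1/\alpha})\,ds.$$
A direct computation of the Jacobian gives $f_\alpha(s,t)\,ds = g_\alpha(u)\,du$ (after reversing the orientation of the interval), and thus
$$S_\alpha v(t)=\int_0^{+\infty}v\bigl((t/u)^\alpha\bigr)g_\alpha(u)\,du.$$
Now the dependence on $t$ sits only in the integrand and the measure $g_\alpha(u)\,du$ is fixed, which turns differentiability of $S_\alpha v$ into a standard problem of differentiation under the integral sign.

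Formally, $\tfrac{d}{dt}v((t/u)^\alpha)=\tfrac{\alpha(t/u)^\alpha}{t}v'((t/u)^\alpha)$, so the candidate derivative is
$$\frac{d}{dt}S_\alpha v(t)=\frac{\alpha}{t}\int_0^{+\infty}(t/u)^\alpha v'\bigl((t/u)^\alpha\bigr)g_\alpha(u)\,du,$$
and substituting back $s=(t/u)^\alpha$ rewrites this precisely as $(\alpha/t)S_\alpha(zv'(z))(t)$. To justify the passage of the derivative inside, I would fix a compact $[t_0,t_1]\subset(0,+\infty)$, take $|h|$ small enough that $t+h$ stays in $[t_0/2,2t_1]$, and apply the mean value theorem to the difference quotient $h^{-1}\bigl(v(((t+h)/u)^\alpha)-v((t/u)^\alpha)\bigr)$. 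Combining this with the hypothesis $|v'(s)|\le C s^{-\beta}$ produces a bound of the form $C'u^{-\alpha(1-\beta)}$, uniform for $t\in[t_0,t_1]$, after which dominated convergence finishes the job. Continuity of $t\mapsto(\alpha/t)S_\alpha(zv'(z))(t)$ on $(0,+\infty)$ follows by the same dominated convergence argument applied to $zv'(z)$, giving $S_\alpha v\in C^1(0,+\infty)$.

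The main obstacle — and the place where both constraints on $\beta$ enter — is verifying that the proposed dominating function $C'u^{-\alpha(1-\beta)}g_\alpha(u)$ is integrable on $(0,+\infty)$. Near $u=0^+$ the super-exponential decay of $g_\alpha$ takes care of any polynomial factor, but at $u\to+\infty$ one uses the classical tail $g_\alpha(u)\sim\alpha u^{-\alpha-1}/\Gamma(1-\alpha)$, which makes the integrand behave like $u^{-\alpha(2-\beta)-1}$; this is integrable precisely when $\beta<2$, i.e., equivalently when $\mathbb{E}[E_\alpha(t)^{1-\beta}]<+\infty$. The lower bound $\beta>(\alpha-1)/\alpha$ arises on the symmetric side of the analysis: one must also guarantee that $v((t/u)^\alpha)$ itself is integrable against $g_\alpha(u)\,du$, so that $S_\alpha v(t)$ is finite and the intermediate objects in the dominated convergence argument are meaningful; integrating the growth bound on $v'$ gives $|v(s)|\lesssim 1+s^{1-\beta}+s^{-(\beta-1)}$, and matching the exponents against the small-$u$ decay of $g_\alpha$ via the scaling \eqref{eqftog} yields exactly the threshold $\beta>(\alpha-1)/\alpha$. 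Once both endpoints of the admissible interval for $\beta$ are checked, the dominated convergence argument closes and the formula for $\tfrac{d}{dt}S_\alpha v$ follows.
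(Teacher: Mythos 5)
Your proposal is correct and follows essentially the same route as the paper's proof: the self-similarity relation \eqref{eqftog} with the substitution $w=ts^{-1/\alpha}$ to rewrite $S_\alpha v(t)=\int_0^{+\infty}v((t/w)^\alpha)g_\alpha(w)\,dw$, followed by differentiation under the integral sign with the dominating function $C_* w^{\alpha(\beta-1)}$, uniform for $t$ in a compact subinterval of $(0,+\infty)$, whose integrability against $g_\alpha$ is exactly the finiteness of $\E[\sigma_\alpha(1)^{\alpha(\beta-1)}]$ guaranteed by $\beta\in\left(\frac{\alpha-1}{\alpha},2\right)$. Your extra discussion of how the two endpoints of the $\beta$-interval arise (the tail $g_\alpha(u)\sim \alpha u^{-\alpha-1}/\Gamma(1-\alpha)$ for $\beta<2$, the behaviour at $u\to 0^+$ for the lower bound) is consistent with, and slightly more explicit than, the paper's one-line appeal to that moment condition.
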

\begin{proof}
	Applying equation \eqref{eqftog} and    changing  variables $ts^{-1/\alpha}=w$, we arrive to equality
	\begin{equation*}
	S_\alpha v(t)=\int_0^{+\infty}v\left(\left(\frac{t}{w}\right)^\alpha\right)g_\alpha(w)dw.
	\end{equation*}
	Let's check  that we can differentiate under the  sign of integral. In order to do this, let us observe that
	\begin{equation*}
	\der{}{t}v\left(\left(\frac{t}{w}\right)^\alpha\right)=\alpha t^{\alpha-1}w^{-\alpha}v'\left(\left(\frac{t}{w}\right)^\alpha\right).
	\end{equation*}
	In particular we obtain
	\begin{equation*}
	\left|\der{}{t}v\left(\left(\frac{t}{w}\right)^\alpha\right)\right|\le C\alpha t^{\alpha(1-\beta)-1}w^{\alpha(\beta-1)}.
	\end{equation*}
	Let us consider $0<t_1<t_2$ and $t \in [t_1,t_2]$. Set
	\begin{equation*}
	t_*=\begin{cases}
	t_1 & \alpha(1-\beta)-1<0\\
	t_2 & \alpha(1-\beta)-1\ge 0
	\end{cases}
	\end{equation*}
	and $C_*=\alpha C t_*^{\alpha(1-\beta)-1}$ to obtain
	\begin{equation*}
	\left|\der{}{t}v\left(\left(\frac{t}{w}\right)^\alpha\right)\right|\le C_*w^{\alpha(\beta-1)}.
	\end{equation*}
	Finally, by definition of $\beta \in (\alpha-1,2\alpha)$, we have that $\E[\sigma_\alpha(1)^{\alpha(\beta-1)}]<+\infty$ and then we can differentiate under the integral sign, concluding the proof.
\end{proof}
\begin{rmk}
	The previous proposition holds true even if there exists a point $T>0$ such that $v \in C^1(0,T) \cap C^1(T,+\infty)$ and $\lim_{t \to T^\pm}v'(t) \in \R$ (still under the hypothesis $|v'(t)|\le C t^{-\beta}$).
\end{rmk}
It was established  in \cite{ascione2019time} that for fixed $x \not = 0$  function $t \mapsto p_H(x,t)$ is Lipschitz. Thus we have that for fixed $x \not = 0$ the function $p_{H,\alpha}(x,t)=S_{\alpha}p_H(x,t)$ is $C^1(0,+\infty)$ in $t$.
\section{The generalized Fokker-Planck equation}\label{Sec4}
We have introduced all the main tools and  can consider  the generalized Fokker-Planck equation associated to the time-changed fractional Ornstein-Uhlenbeck process. First of all, let us give   definitions of different kind of solutions for such equation. Here with \textit{classical solution} we intend the equivalent of a strong solution of a partial differential equation (see, for instance, \cite{gilbarg2015elliptic}), while with \textit{strong solution} we intend a more regular classical solution. Moreover, we refer to \cite{meerschaert2011distributed,gajda2015time} for the definition of \textit{mild solution}. The form of this equation is dictated to us by the type of operators \eqref{caputo} and \eqref{Fokker}.
\begin{defn}
	We say that $v:I \times [0,+\infty) \to \R$ (where $I \subseteq \R$ is an interval) is a \textbf{classical solution} of
	\begin{equation}\label{genFP}
	\partial_t^\Phi v(x,t)=\frac{1}{2}\cF_{\Phi,H}v(x,t), \ (x,t)\in I \times (0,+\infty)
	\end{equation}
	if
	\begin{itemize}
		\item $v$ belongs to the domain of $\cF_{\Phi,H}$;
		\item $\partial^\Phi_t v(x,\cdot)$ is well-defined for any $x \in I$;
		\item Equation \eqref{genFP} holds pointwise for almost any $t \in [0,T]$ and any $x \in I$.
	\end{itemize}
Moreover, we say that a classical solution $v$ is a \textbf{strong solution} if, for any $x \in I$, $v(x,\cdot)\in C^1(0,+\infty)$ and there exists $\varepsilon>0$ such that $v(x,\cdot)\in W^{1,1}(0,\varepsilon)$.\\
We say that $v$ is a \textbf{mild solution} of Equation \eqref{genFP} if
\begin{itemize}
	\item $v(x,\cdot)$ is Laplace transformable for any $x \in I$;
	\item The Laplace transform $\bar{v}$ of $v$ belongs to the domain of $D(\widetilde{L}_{\Phi,H})$ for any $x \in I$;
	\item For any $\lambda \in \bH$ it holds $\bar{v}(\cdot,\lambda) \in C(I)$, where $C(I)$ is the space of continuous functions in $I$;
	\item It holds
	\begin{equation}\label{ltgenFP}
		 \Phi(\lambda)\bar{v}(x,\lambda)-\frac{\Phi(\lambda)}{\lambda}v(x,0)=\frac{\Phi(\lambda)}{2\lambda}\widehat{L}_{\Phi,H}\bar{v}(x,\lambda), \ x \in I, \ \lambda \in \bH.
	\end{equation}
\end{itemize}
\end{defn}
\begin{rmk}
	Let us observe that the definition of classical solution recalls the one of Caratheodory solution for an ordinary differential equation (see \cite{coddington1955theory}), while the definition of strong solution coincides with the usual one for partial differential equations. Concerning mild solutions, let us observe that Equation \eqref{ltgenFP} arises as we take the Laplace transform on both sides of equation \eqref{genFP}. Thus we have the following chain of implications:
	\begin{equation*}
	\mbox{strong solution }\Rightarrow \mbox{ classical solution } \Rightarrow \mbox{ mild solution.}
	\end{equation*}
\end{rmk}
Our first aim is to exploit some sufficient conditions to revert some of these implications. From now on we will only consider solutions $v_\Phi$ such that there exists a Lipschitz function $v$ for which $v_\Phi=S_\Phi v$. First of all, let us stress that, in the $\alpha$-stable case, we easily have
\begin{equation*}
\mbox{strong solution }\Leftarrow \mbox{ classical solution }+ \mbox{ Proposition \ref{derprop}}.
\end{equation*}
In the next subsection we will consider some \textit{gain of regularity} results to pass from mild solutions to classical ones.
\subsection{Gain of regularity for subordinated mild solutions}
To study the gain of regularity for subordinated mild solutions, we first need to introduce the notion of mild solution for the usual Fokker-Planck equation.
\begin{defn}
	We say that $v:I \times [0,+\infty) \to \R$ is a \textbf{classical solution} of
	\begin{equation}\label{nongenFP}
	\partial_t v(x,t)=\frac{1}{2}V'_{2,H}(t)\pdsup{}{x}{2}v(x,t), \ (x,t)\in I \times (0,+\infty)
	\end{equation}
	if
	\begin{itemize}
		\item $v(t,\cdot)$ belongs to $C^2(I)$;
		\item $\partial_t v(x,\cdot)$ belongs to $L^1_{loc}(0,+\infty)$;
		\item Equation \eqref{nongenFP} holds pointwise for almost all $t>0$ and any $x \in I$.
	\end{itemize}
	Moreover, we say that a classical solution $v$ is a strong solution if $v(x,\cdot)\in C^1(0,+\infty)$.\\
	We say that $v$ is a \textbf{mild solution} of Equation \eqref{nongenFP} if
	\begin{itemize}
		\item $v(x,\cdot)$ is Laplace transformable for any $x \in I$ with Laplace transform $\bar{v}$;
		\item For any $\lambda \in \bH$ it holds $\bar{v}(\cdot,\lambda) \in C(I)$;
		\item It holds
		\begin{equation}\label{ltnongenFP}
		\lambda\bar{v}(x,\lambda)-v(x,0)=\frac{1}{2}L_{H}v(x,\lambda), \ x \in I, \ \lambda \in \bH.
		\end{equation}
	\end{itemize}
\end{defn}
Now let us show how the notion of subordinated mild solution for Equation \eqref{genFP} is linked to the one for Equation \eqref{nongenFP}.
\begin{prop}\label{propalphato1}
	Let $v_\Phi=S_\Phi v$ with $v$ satisfying the hypotheses of Proposition \ref{altformLh}. Then the following properties are equivalent:
	\begin{enumerate}
		\item $v_\Phi$ is a mild solution of \eqref{genFP};
		\item $v$ is a mild solution of \eqref{nongenFP}.
	\end{enumerate}
\end{prop}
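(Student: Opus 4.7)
The key observation is that Propositions \ref{Proplaptrans} and \ref{prop:hatnonhat}, together with the trivial identity $v_\Phi(x,0)=v(x,0)$ (coming from $E_\Phi(0)=0$ a.s.), provide a complete dictionary between the two Laplace-side equations. Explicitly, for any $\lambda\in\bH$ one has
\begin{equation*}
\bar{v}_\Phi(x,\lambda)=\frac{\Phi(\lambda)}{\lambda}\bar{v}(x,\Phi(\lambda)),\qquad \widehat{L}_{\Phi,H}\bar{v}_\Phi(x,\lambda)=L_H v(x,\Phi(\lambda)),
\end{equation*}
the second identity being legitimate because the hypotheses on $v$ coincide with those of Proposition \ref{altformLh}. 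The plan is thus to exhibit \eqref{ltgenFP} and \eqref{ltnongenFP} as the same equation up to composition with $\Phi$ and multiplication by $\Phi(\lambda)/\lambda$.

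For the implication $(2)\Rightarrow(1)$, I would simply evaluate \eqref{ltnongenFP} at $\mu=\Phi(\lambda)$ (allowed since $\Phi$ maps $\bH$ into $\bH$), multiply both sides by $\Phi(\lambda)/\lambda$, and substitute the two identities above, with $v_\Phi(x,0)=v(x,0)$, to obtain \eqref{ltgenFP} verbatim. The continuity requirement $\bar{v}_\Phi(\cdot,\lambda)\in C(I)$ is inherited from $\bar{v}(\cdot,\Phi(\lambda))\in C(I)$ via the scalar factor $\Phi(\lambda)/\lambda$, and $\bar{v}_\Phi$ lies in the domain of $\widehat{L}_{\Phi,H}$ by construction of this operator on subordinated transforms.

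For the converse $(1)\Rightarrow(2)$, starting from \eqref{ltgenFP} one cancels the common factor $\Phi(\lambda)/\lambda$ (nonzero on $\bH$ since $\Phi:\bH\to\bH$) and applies the same dictionary in reverse to arrive at
\begin{equation*}
\Phi(\lambda)\bar{v}(x,\Phi(\lambda))-v(x,0)=\tfrac{1}{2}L_H v(x,\Phi(\lambda)),\qquad \lambda\in\bH.
\end{equation*}
Setting $\mu=\Phi(\lambda)$ gives \eqref{ltnongenFP} only on the subset $\Phi(\bH)\subseteq\bH$. The main obstacle is to recover the equation on all of $\bH$, since a priori $\Phi(\bH)$ need not exhaust $\bH$. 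I would resolve this via analytic continuation: both sides of \eqref{ltnongenFP}, viewed as functions of $\mu$, are Laplace transforms of bounded (respectively, integrable) functions, hence holomorphic on $\bH$; and since $\Phi$ maps $(0,+\infty)$ onto $(0,+\infty)$ (using that $\Phi$ is a Bernstein function with $a=0$, $\nu_\Phi(0,+\infty)=+\infty$, strictly increasing, with limits $0$ and $+\infty$ as discussed in Proposition \ref{Proplaptrans}), the identity \eqref{ltnongenFP} holds along the ray $(0,+\infty)\subset\bH$. The identity theorem for holomorphic functions then extends it to the whole of $\bH$. The condition $\bar{v}(\cdot,\mu)\in C(I)$ on all of $\bH$ follows from the Bochner-integral representation $\bar{v}(x,\mu)=\int_0^{+\infty}e^{-\mu t}v(x,t)\,dt$ together with $v(\cdot,t)\in C(I)$, which may be assumed without loss of generality since the Lipschitz hypothesis of Proposition \ref{altformLh} has been built into the formulation.

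The genuine technical point is therefore the holomorphic extension argument in the last paragraph; the rest is purely algebraic manipulation using the two Propositions already established.
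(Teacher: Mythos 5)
Your proposal is correct and follows essentially the same route as the paper: use Proposition \ref{Proplaptrans} and Proposition \ref{prop:hatnonhat} as a dictionary between the two Laplace-side equations, reduce to real $\lambda>0$, and exploit the invertibility of $\Phi$ on $(0,+\infty)$ to pass between \eqref{ltgenFP} and \eqref{ltnongenFP}. The only difference is that you make explicit the analytic-continuation step needed to recover \eqref{ltnongenFP} on all of $\bH$ from its validity on $(0,+\infty)$, a detail the paper compresses into its ``without loss of generality, we can suppose $\lambda>0$ is real'' remark.
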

\begin{proof}
	Let us first observe that by hypotheses $v$ is Laplace transformable, so also $v_\Phi$ is Laplace transformable. Moreover, since $v$ satisfies the hypotheses of Proposition \ref{altformLh}, by Proposition \ref{prop:hatnonhat} we have that $\bar{v}_\Phi$ belongs to the domain of $\widehat{L}_{\Phi,H}$. Moreover, by Proposition \ref{Proplaptrans} we know that $\bar{v}_\Phi$ is continuous in $x$ if and only if $\bar{v}$ is continuous in $x$. Thus we only have to show the equivalence of Equations \eqref{ltgenFP} and \eqref{ltnongenFP}. Let us only show that $(1)$ implies $(2)$, since the converse is analogous. To do this, just observe that, since $v_\Phi$ is a subordinated mild solution of \eqref{genFP}, equation \eqref{ltgenFP} holds, that is to say, by also using Proposition \ref{Proplaptrans} and the fact that, by definition $v_\Phi(x,0)=v(x,0)$, for any $x \in I$ and $\lambda \in \bH$
	\begin{equation*}
	 \frac{\Phi(\lambda)}{\lambda}(\Phi(\lambda)\overline{v}(x,\Phi(\lambda))-v(x,0))=\frac{\Phi(\lambda)}{2\lambda}\pdsup{}{x}{2}L_Hv(x,\Phi(\lambda)),
	\end{equation*}
	where $\overline{v}(x,\lambda)$ is the Laplace transform of $v(x,t)$.\\
	Without loss of generality, we can suppose $\lambda>0$ is real. Then we can multiply last relation by $\frac{\lambda}{\Phi(\lambda)}$ and write $\lambda$ in place of $\Phi(\lambda)$ (since $\Phi:[0,+\infty) \to [0,+\infty)$ is invertible), obtaining
	\begin{equation*}
	\lambda\overline{v}(x,\lambda)-v(x,0)=\frac{1}{2}\pdsup{}{x}{2}L_Hv(x,\lambda)
	\end{equation*}
	that is equation \eqref{ltnongenFP} for $v$.
\end{proof}
To show the gain of regularity result, we first need to express the Fokker-Planck operator $\cF_{\Phi,H}$ in terms of the weighted subordination operator $S_{\Phi,H}$.
\begin{lem}\label{lem:weitoFok}
	Let $v \in L^\infty(\R^+;C(I))$ and $v_\Phi=S_\Phi v$, with $v$ satisfying the hypotheses of Proposition \ref{altformLh}, be a mild solution of \eqref{genFP} such that $v_\Phi \in D(\cF_{\Phi,H},I)$ and $\cF_{\Phi,H}v_\Phi(\cdot,t)\in C(I)$ for any fixed $t>0$. Then it holds $S_{\Phi,H}v(\cdot,t) \in C^2(I)$ and, for any $x \in I$ and almost any $t \in I$,
	\begin{equation*}
	\cF_{\Phi,H}v_\Phi(x,t)=\pdsup{}{x}{2}S_{\Phi,H}v(x,t).
	\end{equation*}
\end{lem}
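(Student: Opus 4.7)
The plan is to translate the frequency-domain definition of $\cF_{\Phi,H}$ into a statement about the time-domain function $S_{\Phi,H}v$, and then recover the second $x$-derivative via injectivity of the Laplace transform. Combining Proposition~\ref{prop:hatnonhat} (which gives $\widehat{L}_{\Phi,H}\bar{v}_\Phi(x,\lambda) = L_H v(x,\Phi(\lambda))$, since $v_\Phi = S_\Phi v$) with the Corollary following Proposition~\ref{Proplaptrans} (which gives $\cL[S_{\Phi,H}v](x,\lambda) = \tfrac{\Phi(\lambda)}{\lambda} L_H v(x,\Phi(\lambda))$), the definition \eqref{Fokker} rewrites as
\begin{equation*}
\cF_{\Phi,H}v_\Phi(x,t) = \cL^{-1}_{\lambda \to t}\left[\pdsup{}{x}{2}\cL_{t\to\lambda}[S_{\Phi,H}v(x,\cdot)](\lambda)\right](t).
\end{equation*}
Writing $w := S_{\Phi,H}v$ and $u := \cF_{\Phi,H}v_\Phi$, this says that $\cL[u(x,\cdot)](\lambda) = \pdsup{}{x}{2}\bar{w}(x,\lambda)$; the existence of $\pdsup{}{x}{2}\bar{w}(x,\lambda)$ as well as of $\pd{\bar{w}}{x}(x,\lambda)$ is built into the assumption $v_\Phi \in D(\cF_{\Phi,H})$.

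Next I would fix $x_0 \in I$ and introduce the double antiderivative
\begin{equation*}
W(x,t) := \int_{x_0}^{x} \int_{x_0}^{y} u(z,t)\, dz\, dy.
\end{equation*}
Since $u(\cdot,t) \in C(I)$ by hypothesis, $W(\cdot,t) \in C^2(I)$ with $\pdsup{}{x}{2}W(x,t) = u(x,t)$ pointwise. A Fubini interchange, justified by the continuity of $u$ in $x$ together with the $L^\infty$ control inherited from $v$, allows the $t$-Laplace transform to pass through both $x$-integrals, giving
\begin{equation*}
\cL[W(x,\cdot)](\lambda) = \int_{x_0}^{x} \int_{x_0}^{y} \pdsup{}{z}{2}\bar{w}(z,\lambda)\, dz\, dy = \bar{w}(x,\lambda) - \bar{w}(x_0,\lambda) - (x-x_0)\,\pd{\bar{w}}{x}(x_0,\lambda).
\end{equation*}

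Reading this identity at any point $x_1 \neq x_0$ of $I$ identifies $\pd{\bar{w}}{x}(x_0,\lambda)$ as the Laplace transform of
\begin{equation*}
G(t) := \frac{w(x_1,t) - w(x_0,t) - W(x_1,t)}{x_1 - x_0}.
\end{equation*}
Substituting back and invoking the injectivity of the Laplace transform (Proposition~\ref{Proplaptrans}), one gets, for each fixed $x \in I$ and almost every $t > 0$,
\begin{equation*}
w(x,t) = W(x,t) + w(x_0,t) + (x-x_0)\,G(t).
\end{equation*}
The right-hand side is $C^2$ in $x$ with second $x$-derivative $u(x,t)$, which yields both $w(\cdot,t) \in C^2(I)$ and $\pdsup{}{x}{2}w(x,t) = \cF_{\Phi,H}v_\Phi(x,t)$.

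The main technical obstacle is two-fold. First, the Fubini step above requires verifying that the inner Laplace integral and the double $x$-integration commute; this should follow from the $L^\infty$ bound on $v$ (and hence on $w$ and $u$ on bounded $x$-intervals) combined with continuity of $u$ in $x$, but must be spelled out carefully. Second, injectivity of the Laplace transform yields the final identity only outside a null set in $t$ that a priori depends on $x$; the cleanest fix is to run the argument along a countable dense subset of $I$ and then extend to all $x \in I$ by the continuity in $x$ of both sides. Everything else is a mechanical translation between the frequency-domain description of $\cF_{\Phi,H}$ and the time-domain description of $S_{\Phi,H}v$.
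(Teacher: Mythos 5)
Your reduction to the identity $\cF_{\Phi,H}v_\Phi(x,t)=\cL^{-1}_{\lambda\to t}\bigl[\pdsup{}{x}{2}\cL[S_{\Phi,H}v(x,\cdot)](\lambda)\bigr](t)$ via Proposition \ref{prop:hatnonhat} and the corollary to Proposition \ref{Proplaptrans} is exactly the paper's first step. Where you diverge is in how you pass the second $x$-derivative back through the inverse Laplace transform: the paper treats $S_{\Phi,H}v$ as a $C(I)$-valued function of $t$, observes that $\pdsup{}{x}{2}:C^2(I)\to C(I)$ is a closed operator, and invokes the abstract result \cite[Proposition $1.7.6$]{arendt} on commuting closed operators with vector-valued Laplace transforms; you instead reprove that special case by hand, integrating $u=\cF_{\Phi,H}v_\Phi$ twice in $x$, matching Laplace transforms against the Taylor-type identity for $\bar w(\cdot,\lambda)\in C^2(I)$, and concluding by injectivity. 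Your route is more elementary and self-contained, but it pays for this in exactly the two places you flag: the Fubini interchange needs $\int_0^{+\infty}e^{-\Re\lambda\, t}\sup_{z}|u(z,t)|\,dt<+\infty$ locally in $x$ (which does not follow automatically from pointwise Laplace transformability plus continuity in $x$, and is precisely what the vector-valued $C(I)$ formulation packages away), and the injectivity step produces an exceptional null set in $t$ for each $x$, so your patch via a countable dense set yields $S_{\Phi,H}v(\cdot,t)\in C^2(I)$ only for almost every $t$ rather than every $t$ as stated (a caveat that, to be fair, is also implicit in the cited abstract result). With those two points spelled out, your argument is a valid, if longer, substitute for the paper's appeal to \cite{arendt}.
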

\begin{proof}
	Let us consider $\lambda>0$ without loss of generality. Since $v_\Phi$ is a mild solution of \eqref{genFP}, by using Proposition \ref{prop:hatnonhat}, we obtain that $\frac{\Phi(\lambda)}{\lambda}L_Hv(\cdot ,\Phi(\lambda)) \in C^2(I)$. Since $v_\Phi$ belongs to the domain of $\cF_{\Phi,H}$, we have that $\pdsup{}{x}{2}\frac{\Phi(\lambda)}{\lambda}L_Hv(x ,\Phi(\lambda))$ is the Laplace transform of some function.
	However, let us observe that $S_{\Phi,H}v \in L^\infty(\R^+;C(I))$ and $$\cL[S_{\Phi,H}v(x,\cdot)]=\frac{\Phi(\lambda)}{\lambda}L_Hv(x ,\Phi(\lambda)),$$ thus, being $\pdsup{}{x}{2}:C^2(I)\to C(I)$ a closed operator and $S_{\Phi,H}:L^\infty(\R^+;C^2(I))\to L^\infty(\R^+;C^2(I))$ well defined, we have, by \cite[Proposition $1.7.6$]{arendt}, that $S_{\Phi,H}v(\cdot,t)\in C^2(I)$ for any $t>0$ and
	\begin{equation*}
	\pdsup{}{x}{2}S_{\Phi,H}v(x,t)=\cL^{-1}_{\lambda \to t}\left[\pdsup{}{x}{2}\frac{\Phi(\lambda)}{\lambda}L_Hv(x ,\Phi(\lambda))\right](t)=\cF_{\Phi,H}v_\Phi(x,t),
	\end{equation*}
	concluding the proof.
\end{proof}
\begin{rmk}\label{rmk:multoFok}
	With the same argument as in the previous Lemma, we can prove that if $v \in L^\infty(\R^+;C(I))$ is a mild solution of Equation \eqref{nongenFP}, then $v(\cdot, t) \in C^2(I)$.
\end{rmk}
Now we are ready to prove the main result of this section.
\begin{thm}\label{thm:gain}
	Let $v \in L^\infty(\R^+;C(I))$ and $v_\Phi=S_\Phi v$, with $v$ satisfying the hypotheses of Proposition \ref{altformLh}, be a mild solution of Equation \eqref{genFP}. Suppose for fixed $t>0$ it holds $v(\cdot,t) \in C^2(I)$. Moreover, suppose that $$V'_{2,H}(\cdot)\pdsup{}{x}{2}v(x,\cdot)\in L^\infty(0,+\infty)$$ for any fixed $x \in I$. Then $v_\Phi$ is a classical solution of \eqref{genFP}.
\end{thm}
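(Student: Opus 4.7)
The plan is to use the subordination structure to reduce the claim to an analogous classical-from-mild result for the non-generalized equation \eqref{nongenFP}, prove that intermediate result with the aid of the two hypotheses, and then transport the classical identity for $v$ back up to $v_\Phi$ through the Laplace-transform correspondences of Propositions \ref{Proplaptrans} and \ref{prop:hatnonhat}. The two regularity hypotheses are tuned exactly to let me commute $\partial_x^2$ with the Laplace integral defining $L_H$ by dominated convergence.

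First, Proposition \ref{propalphato1} tells me that $v$ itself is a mild solution of \eqref{nongenFP}, so
\begin{equation*}
\lambda\bar v(x,\lambda)-v(x,0)=\frac{1}{2}\partial_x^2 L_H v(x,\lambda),\qquad x\in I,\ \lambda\in\bH.
\end{equation*}
Using $v(\cdot,t)\in C^2(I)$ together with the bound $V'_{2,H}(t)\partial_x^2 v(x,t)\in L^\infty(0,+\infty)$, dominated convergence yields $\partial_x^2 L_H v(x,\lambda)=\cL_{t\to\lambda}\bigl[V'_{2,H}(t)\partial_x^2 v(x,t)\bigr]$, so that the mild identity becomes $\lambda\bar v(x,\lambda)-v(x,0)=\cL\bigl[\tfrac{1}{2}V'_{2,H}(\cdot)\partial_x^2 v(x,\cdot)\bigr](\lambda)$. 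Dividing by $\lambda$ (multiplication by $1/\lambda$ on the transform side corresponds to time integration) and inverting gives
\begin{equation*}
v(x,t)=v(x,0)+\frac{1}{2}\int_0^t V'_{2,H}(s)\partial_x^2 v(x,s)\,ds,
\end{equation*}
so $v(x,\cdot)$ is absolutely continuous with $\partial_t v(x,t)=\tfrac{1}{2}V'_{2,H}(t)\partial_x^2 v(x,t)$ for almost every $t$. Hence $v$ is a classical solution of \eqref{nongenFP}.

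Transporting this back to $v_\Phi=S_\Phi v$: using $v_\Phi(x,0^+)=v(x,0)$, Proposition \ref{Proplaptrans}, the Caputo Laplace formula, and the mild identity for $v$ evaluated at $\Phi(\lambda)$, I compute
\begin{equation*}
\cL\bigl[\partial_t^\Phi v_\Phi(x,\cdot)\bigr](\lambda)=\frac{\Phi(\lambda)}{\lambda}\bigl[\Phi(\lambda)\bar v(x,\Phi(\lambda))-v(x,0)\bigr]=\frac{\Phi(\lambda)}{2\lambda}\partial_x^2 L_H v(x,\Phi(\lambda)),
\end{equation*}
and Proposition \ref{prop:hatnonhat} rewrites the last expression as $\tfrac{\Phi(\lambda)}{2\lambda}\partial_x^2\widehat L_{\Phi,H}\bar v_\Phi(x,\lambda)=\cL\bigl[\tfrac{1}{2}\cF_{\Phi,H}v_\Phi\bigr](\lambda)$. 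The exhibited transform both shows $v_\Phi\in D(\cF_{\Phi,H})$ and, by Laplace injectivity, gives $\partial_t^\Phi v_\Phi(x,t)=\tfrac{1}{2}\cF_{\Phi,H}v_\Phi(x,t)$ for almost every $t>0$ and every $x\in I$, which is precisely the classical-solution condition. (As an aside, Lemma \ref{lem:weitoFok} also re-expresses the right-hand side as $\partial_x^2 S_{\Phi,H}v$.) The main obstacle is the differentiation under the Laplace integral in the intermediate step: without the $L^\infty$ bound on $V'_{2,H}\partial_x^2 v$ one can neither justify $\partial_x^2 L_H v=L_H\partial_x^2 v$ nor guarantee that $V'_{2,H}\partial_x^2 v$ is Laplace transformable, and the mild-to-classical upgrade for \eqref{nongenFP}, on which the whole argument rests, collapses.
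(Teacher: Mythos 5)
Your first half---upgrading $v$ from a mild to a classical solution of \eqref{nongenFP} via the commutation $\partial_x^2 L_H v = L_H\partial_x^2 v$ and the integral representation $v(x,t)=v(x,0)+\frac12\int_0^t V'_{2,H}(s)\partial_x^2 v(x,s)\,ds$---is exactly the paper's argument (the paper justifies the commutation by closedness of $\partial_x^2:C^2(I)\to C(I)$ rather than by dominated convergence, which is the safer route since your $L^\infty$ bound is stated for each fixed $x$ and does not obviously dominate the difference quotients in $x$). The genuine problem is in the transport back to $v_\Phi$. There you write $\cL[\partial_t^\Phi v_\Phi(x,\cdot)](\lambda)=\Phi(\lambda)\bar v_\Phi(x,\lambda)-\frac{\Phi(\lambda)}{\lambda}v(x,0)$, i.e.\ you invoke the Caputo Laplace formula for $v_\Phi$. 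That formula holds under the hypothesis that $\partial^\Phi u$ exists and is Laplace transformable---but the existence of $\partial_t^\Phi v_\Phi(x,\cdot)$ is precisely what separates a classical solution from a mild one, and is the main thing the theorem has to produce. As written, your argument is circular: you assume the nonlocal derivative exists in order to compute its transform, and then read off from that computation that $v_\Phi$ is a classical solution.

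The paper closes this gap as follows. Since $v$ is by then a classical solution of \eqref{nongenFP} with $\partial_t v=\frac12 V'_{2,H}\partial_x^2 v\in L^\infty(0,+\infty)$, the function $S_\Phi\partial_t v$ is well defined. One matches the Laplace transforms of two honestly defined objects, $\bar\nu_\Phi\ast(v_\Phi(x,\cdot)-v(x,0))$ and $\int_0^t S_\Phi\partial_t v(x,s)\,ds$, to conclude that they coincide; differentiating the right-hand side (an absolutely continuous function) a.e.\ in $t$ then shows that $\partial_t^\Phi v_\Phi$ exists a.e.\ and equals $S_\Phi\partial_t v$. A separate Laplace computation, using the mild identity \eqref{ltgenFP}, identifies $\cL[S_\Phi\partial_t v]$ with $\frac{\Phi(\lambda)}{2\lambda}\partial_x^2\widehat{L}_{\Phi,H}\cL[v_\Phi]$, which simultaneously gives $v_\Phi\in D(\cF_{\Phi,H})$ and $S_\Phi\partial_t v=\frac12\cF_{\Phi,H}v_\Phi$, non-circularly. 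Replace your direct appeal to the Caputo Laplace formula with this construction and the rest of your outline goes through.
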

\begin{proof}
	By Proposition \ref{propalphato1} and Remark \ref{rmk:multoFok} we know that $v(\cdot, t) \in C^2(I)$.\\
	Now let us observe that, since we know that $v$ is mild solution of \eqref{nongenFP}, it holds
	\begin{equation*}
	\cL[v(x,\cdot)](\lambda)=\frac{1}{\lambda}v(x,0)+\frac{1}{2\lambda}\pdsup{}{x}{2}L_Hv(x,\lambda).
	\end{equation*}
	Since $v(\cdot,t) \in C^2(I)$ and $\pdsup{}{x}{2}:C^2(I)\to C(I)$ is a closed operator, we have, by \cite[Proposition $1.7.6$]{arendt},
	\begin{equation*}
	\pdsup{}{x}{2}L_Hv(x,\lambda)=L_H\left(\pdsup{}{x}{2}v(x,\cdot)\right)(\lambda).
	\end{equation*}
	Now, according to theorem's condition,  $V'_{2,H}(\cdot)\pdsup{}{x}{2}v(x,\cdot) \in L^\infty(0,+\infty)$, therefore we can define
	\begin{equation*}
	F(x,t)=\frac{1}{2}\int_0^tV'_{2,H}(s)\pdsup{}{x}{2}v(x,s)ds
	\end{equation*}
	and take the Laplace transform to obtain
	\begin{equation*}
	\cL[F(x,\cdot)](\lambda)=\frac{1}{2\lambda}L_H\left[\pdsup{}{x}{2}v(x,\cdot)\right](\lambda).
	\end{equation*}
	Hence we get
	\begin{equation*}
	\cL[v(x,\cdot)](\lambda)=\cL[v(x,0)+F(x,\cdot)](\lambda)
	\end{equation*}
	and then
	\begin{equation*}
	v(x,t)=v(x,0)+\frac{1}{2}\int_0^t V'_{2,H}(s)\pdsup{}{x}{2}v(x,s)ds.
	\end{equation*}
	In particular, $v(x,\cdot)$ is absolutely continuous and, taking almost everywhere the derivative in $t$, $v$ is a classical solution of \eqref{nongenFP}.\\
	Now let us consider the function $v_\Phi(x,t)-v(x,0)$ and, observing that $\bar{\nu}_\Phi$ is Laplace transformable with Laplace transform $\frac{\Phi(\lambda)}{\lambda}$, we have that
	\begin{equation*}
	\cL[\bar{\nu}_\Phi \ast (v_\Phi(x,\cdot)-v(x,0))]=\frac{\Phi(\lambda)}{\lambda}\cL[v_\Phi(x,\cdot)](\lambda)-\frac{\Phi(\lambda)}{\lambda^2}v(x,0).
	\end{equation*}
	Now, since $\partial_t v(x,t)=\frac{1}{2}\cF_Hv(x,t)$ and $\cF_Hv(x,\cdot)\in L^\infty(0,+\infty)$, also $\partial_t v(x,\cdot)\in L^\infty(0,+\infty)$ and we can apply $S_\Phi$ to it. By Proposition \ref{Proplaptrans} and \cite[Corollary $1.6.5$]{arendt} we have
	\begin{align*}
	\cL_{t \to \lambda}\left[\int_0^t S_\Phi\partial_t v(x,s)ds\right]&=\frac{\Phi^2(\lambda)}{\lambda^2}\cL[v(x,\cdot)](\Phi(\lambda))-\frac{\Phi(\lambda)}{\lambda^2}v(x,0)\\
	&=\frac{\Phi(\lambda)}{\lambda}\cL[v_\Phi(x,\cdot)](\lambda)-\frac{\Phi(\lambda)}{\lambda^2}v(x,0)\\
	&=\cL[\bar{\nu}_\Phi \ast (v_\Phi(x,\cdot)-v(x,0))],
	\end{align*}
	and then it holds
	\begin{equation*}
	\bar{\nu}_\Phi \ast (v_\Phi(x,\cdot)-v(x,0))(t)=\int_0^t S_\Phi\partial_t v(x,s)ds.
	\end{equation*}
	Thus we can differentiate on both sides to achieve, for almost any $t>0$,
	\begin{equation*}
	\partial_t^\Phi v_\Phi(x,t)=S_\Phi\partial_t v(x,t).
	\end{equation*}
	However, we also have, being $v_\Phi$ a mild solution of \eqref{genFP},
	\begin{align*}
	\cL\left[S_\Phi\partial_t v(x,\cdot)\right](\lambda)&=\Phi(\lambda)\cL[v_\Phi(x,\cdot)](\lambda)-\frac{\Phi(\lambda)}{\lambda}v(x,0)\\
	&=\frac{\Phi(\lambda)}{2\lambda}\pdsup{}{x}{2}\widehat{L}_H\cL[v_\Phi](x,\lambda).
	\end{align*}
	Hence, we have that $\frac{\Phi(\lambda)}{2\lambda}\pdsup{}{x}{2}\widehat{L}_H\cL[v_\Phi](x,\lambda)$ is the Laplace transform of something and then we can take the inverse Laplace transform to obtain
	\begin{equation*}
	S_\Phi\partial_t v_\Phi(x,t)=\frac{1}{2}\cF_{\Phi,H} v_\Phi(x,t).
	\end{equation*}
	Finally we get
	\begin{equation*}
	\partial_t^\Phi v_\Phi(x,t)=\frac{1}{2}\cF_{\Phi,H} v_\Phi(x,t),
	\end{equation*}
	concluding the proof.
\end{proof}
As a direct consequence, we can formulate  the following statement concerning the function $p_{\Phi,H}$.
\begin{cor}
	$p_{\Phi,H}$ is a classical solution of \eqref{genFP} for $I=\R^*$. Moreover $p_{\alpha,H}$ is a strong solution of \eqref{genFP}.
\end{cor}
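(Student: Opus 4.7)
My approach is to identify $v = p_H$ and $v_\Phi = p_{H,\Phi} = S_\Phi p_H$ (using \cite[Proposition~4.1]{ascione2019time}) and apply Theorem~\ref{thm:gain}. Since the classical-solution property is pointwise in $x$, it is enough to prove it on an arbitrary compact $K \subset \R^*$ and then take the union over such $K$. The mild-solution hypothesis is furnished by \eqref{genFPpH}, while the excerpt explicitly notes that $p_H$ is Lipschitz in $t$ and satisfies the hypotheses of Proposition~\ref{altformLh}. What remains is to verify, for $v = p_H$ and $I = K$, the three technical conditions of Theorem~\ref{thm:gain}: (i)~$p_H \in L^\infty(\R^+; C(K))$, (ii)~$p_H(\cdot,t) \in C^2(K)$ for fixed $t$, and (iii)~$V'_{2,H}(\cdot)\, \pdsup{}{x}{2} p_H(x,\cdot) \in L^\infty(0,+\infty)$ for fixed $x \in K$. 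Using $p_H(x,t) = (2\pi V_{2,H}(t))^{-1/2}\exp(-x^2/(2V_{2,H}(t)))$, condition (ii) is trivial, and (i) follows from the bound $\sup_{x \in K} p_H(x,t) \le (2\pi V_{2,H}(t))^{-1/2}\exp(-d^2/(2V_{2,H}(t)))$ with $d = \inf_{x \in K}|x| > 0$: near $t = 0^+$ the exponential annihilates the $t^{-H}$ blow-up of the prefactor via $V_{2,H}(t) \sim t^{2H}$ (Lemma~\ref{lem:Varlem}), and for large $t$ everything stays bounded by the same lemma.

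\textbf{Main technical step.} The sensitive point is (iii). Direct differentiation gives
\begin{equation*}
V'_{2,H}(t)\, \pdsup{}{x}{2} p_H(x,t) = \frac{V'_{2,H}(t)\bigl(x^2 - V_{2,H}(t)\bigr)}{V_{2,H}(t)^2}\, p_H(x,t).
\end{equation*}
As $t \to 0^+$, Lemma~\ref{lem:Varlem} yields $V'_{2,H}(t)/V_{2,H}(t)^2 \sim 2H\, t^{-2H-1}$, but for fixed $x \neq 0$ the Gaussian factor decays like $t^{-H}\exp(-x^2/(2t^{2H}))$, which dominates any power of $t$. As $t \to +\infty$, Lemma~\ref{lem:Varlem} gives $V'_{2,H}(t) \sim 2H(2H-1)\theta\, t^{2H-2} e^{-t/\theta}$, so the whole expression decays exponentially while the remaining factors stay bounded. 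This establishes (iii), and Theorem~\ref{thm:gain} delivers the classical-solution claim on $K$, hence on $\R^*$.

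\textbf{Strong solution for $\Phi(\lambda) = \lambda^\alpha$.} As noted just before the corollary, Proposition~\ref{derprop} applied to the Lipschitz function $t \mapsto p_H(x,t)$ (the bound $|\partial_s p_H| \le L$ corresponds to taking $\beta = 0$, which lies in $((\alpha-1)/\alpha, 2)$) already gives $p_{H,\alpha}(x,\cdot) \in C^1(0,+\infty)$. To secure the $W^{1,1}(0,\varepsilon)$ requirement, I would apply the explicit derivative formula of Proposition~\ref{derprop}, namely $\partial_t p_{H,\alpha}(x,t) = \alpha\, t^{-1}\, S_\alpha(z\, \partial_s p_H(x,z))(t)$, combined with $|z\, \partial_s p_H(x,z)| \le L z$ and $S_\alpha(z)(t) = t^\alpha/\Gamma(1 + \alpha)$, to obtain $|\partial_t p_{H,\alpha}(x,t)| \le C\, t^{\alpha - 1}$ near $t = 0^+$. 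Since $\alpha - 1 \in (-1,0)$ this bound is locally integrable, so $p_{H,\alpha}(x,\cdot) \in W^{1,1}(0,\varepsilon)$ and the classical solution is upgraded to strong.
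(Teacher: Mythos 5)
Your proposal is correct and follows exactly the route the paper intends: the paper states this corollary without proof as a "direct consequence" of Theorem~\ref{thm:gain} (for the classical-solution part, with $v=p_H$ and $v_\Phi=S_\Phi p_H$) combined with the remark preceding Subsection~4.1 that, in the $\alpha$-stable case, Proposition~\ref{derprop} applied to the Lipschitz function $t\mapsto p_H(x,t)$ upgrades classical to strong. Your explicit verification of the hypotheses of Theorem~\ref{thm:gain} via the Gaussian form of $p_H$ and Lemma~\ref{lem:Varlem}, and of the $W^{1,1}(0,\varepsilon)$ requirement via the derivative formula of Proposition~\ref{derprop}, supplies precisely the details the paper leaves implicit.
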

\begin{rmk}
	Let us observe that if $v$ satisfies hypothesis $(a)$ of Proposition \ref{altformLh}, then the hypotheses of Theorem \ref{thm:gain} also imply that $v_\alpha=S_\alpha v$ is a strong solution of equation \eqref{genFP}.
\end{rmk}
\section{Uniqueness issues}\label{Sec5}
In this section we will discuss some uniqueness issues concerning mild and strong solutions. Let us stress, as we will observe later, that the uniqueness results concerning strong solutions can be adapted to the classical ones by extending the extremal point property in Proposition \ref{extremval} to less regular functions via a mollifying procedure.
\subsection{Isolation of mild solutions}
However, for mild solutions, we are not able to show uniqueness. We can still prove a form of \textit{isolation} result for mild solutions, i.e. the fact that mild solutions cannot be compared with respect to a suitable partial order.
\begin{defn}
	Let us define $\cS_\Phi$ the range of the operator $S_\Phi:L^\infty(\R^+;C(I)) \to L^\infty(\R^+;C(I))$ where $I=[a,b]$. Let $v_\Phi, w_\Phi \in L^\infty(\R^+;C(I))$ with $v_\Phi=S_\Phi v$ and $w_\Phi=S_\Phi w$. We say that $v_\Phi \preceq w_\Phi$ if and only if:
	\begin{itemize}
		\item $v \le w$ in $I \times \R^+$;
		\item There exist two constants $0<\varepsilon\le M$ such that for any $x \in I$ the function $(w-v)(x,\cdot)$ is increasing in $[0,\varepsilon]$ and decreasing in $[M,+\infty)$.
	\end{itemize}
	In particular $\preceq$ is a partial order on $\cS_\Phi$, that is well defined by injectivity of the operator $S_\Phi$.
\end{defn}
Now let us observe that we can recognize \eqref{ltgenFP} as a second order parametric ordinary differential equation. Therefore,  we can consider the Cauchy problem
\begin{equation}\label{CauprobgenFP}
\begin{cases}
\Phi(\lambda)\bar{v}_\Phi(x,\lambda)-\frac{\Phi(\lambda)}{\lambda}v_\Phi(x,0)=\frac{\Phi(\lambda)}{2\lambda}\pdsup{}{x}{2}\widehat{L}_{\Phi,H}\bar{v}_\Phi(x,\lambda) & (x,\lambda)\in I \times (0,+\infty)\\
v_\Phi(x,0)=f(x) & x \in I\\
\widehat{L}_{\Phi,H}\bar{v}_\Phi(a,\lambda)=g_1(\lambda) & \lambda>0 \\
\pd{}{x}\widehat{L}_{\Phi,H}\bar{v}_\Phi(a,\lambda)=g_2(\lambda) & \lambda>0,
\end{cases}
\end{equation}
which is the \textit{natural} Cauchy problem associated to mild solutions of equation \eqref{genFP}. What we want to show is that two different mild solutions of Equation \eqref{genFP} with the same \textit{boundary data} cannot be compared with $\preceq$. This is actually the aim of the following Theorem.
\begin{thm}
	Let $I=[a,b]$, $v,w \in L^\infty(\R^+;C(I))$ and consider $v_\Phi=S_\Phi v$ and $w_\Phi=S_\Phi w$ such that, denoting $\bar{v}_\Phi=\cL[v_\Phi]$ and $\bar{w}_\Phi=\cL[w_\Phi]$, these are solutions of the Cauchy problem \eqref{CauprobgenFP}. If $w_\Phi \preceq v_\Phi$, then $w_\Phi=v_\Phi$.
\end{thm}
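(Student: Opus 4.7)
The plan is to show that $u := v - w$ vanishes identically; by injectivity of $S_\Phi$ (Proposition \ref{Proplaptrans}) applied to $u_\Phi := v_\Phi - w_\Phi = S_\Phi u$, this will give $v_\Phi = w_\Phi$. The ordering $w_\Phi \preceq v_\Phi$ supplies $u \geq 0$ together with the monotonicity of $u(x,\cdot)$ on $[0,\varepsilon]$ and on $[M,+\infty)$, while the equality of the initial and boundary data in \eqref{CauprobgenFP} makes $u_\Phi$ a subordinated mild solution of \eqref{genFP} with homogeneous data: $u(x,0)=0$ and $\widehat{L}_{\Phi,H}\bar u_\Phi(a,\lambda) = \partial_x \widehat{L}_{\Phi,H}\bar u_\Phi(a,\lambda) = 0$ for every $\lambda > 0$.

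First I would invoke Proposition \ref{propalphato1} to transfer the problem to the non-subordinated setting, where $u$ is a mild solution of \eqref{nongenFP}. Proposition \ref{prop:hatnonhat} and the bijectivity of $\Phi:(0,+\infty)\to(0,+\infty)$ then convert the boundary conditions into $L_H u(a,\mu) = \partial_x L_H u(a,\mu) = 0$ for every $\mu>0$, and since $V'_{2,H}(t) > 0$ on $(0,+\infty)$, injectivity of the Laplace transform yields $u(a,t) = \partial_x u(a,t) = 0$ for a.e.\ $t > 0$. Integrating the mild identity $\partial_x^2 L_H u(x,\mu) = 2\mu\,\bar u(x,\mu)$ twice in $x$ with these vanishing Cauchy data produces
\[
L_H u(x,\mu) = 2\mu \int_a^x (x-z)\,\bar u(z,\mu)\,dz,
\]
and by Fubini and Laplace injectivity the pointwise a.e.\ identity $V'_{2,H}(t)\,u(x,t) = 2\,\partial_t G(x,t)$, where $G(x,t) := \int_a^x (x-z)\,u(z,t)\,dz$ satisfies $G(x,0)=0$. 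Since $u(\cdot,t) \in C^2(I)$ by Remark \ref{rmk:multoFok}, and the monotonicity of $u(x,\cdot)$ near $0$ and near $+\infty$ from $\preceq$ provides the a.e.\ $t$-regularity required, I can exchange $\partial_t$ with $\partial_x^2$ in the relation $\partial_x^2 G = u$, upgrading $u$ to a classical solution $\partial_t u(x,t) = \tfrac{1}{2} V'_{2,H}(t)\,\partial_x^2 u(x,t)$ of \eqref{nongenFP}.

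The endgame is a time-change $\tau = V_{2,H}(t)$, which by Lemma \ref{lem:Varlem} is a $C^1$-diffeomorphism of $[0,+\infty)$ onto $[0,\theta^{2H}H\Gamma(2H))$. Setting $\tilde u(x,\tau) := u(x, V_{2,H}^{-1}(\tau))$ turns the equation into the classical heat equation $\partial_\tau \tilde u = \tfrac{1}{2}\,\partial_x^2 \tilde u$ with $\tilde u \geq 0$, $\tilde u(x,0) = 0$, and vanishing Cauchy data $\tilde u(a,\tau) = \partial_x \tilde u(a,\tau) = 0$. Bounded solutions of the heat equation are real-analytic in $x$ by parabolic regularity, and differentiating the PDE in $x$ at $x = a$ inductively gives $\partial_x^k \tilde u(a,\tau) = 0$ for every $k \geq 0$; analyticity in $x$ then forces $\tilde u \equiv 0$, hence $u \equiv 0$, and finally $v_\Phi = w_\Phi$ by injectivity of $S_\Phi$.

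The step I expect to be the main obstacle is the \emph{classical-ization}: the identity $V'_{2,H}(t)u(x,t) = 2\,\partial_t G(x,t)$ only holds a.e.\ in $t$, and a priori $u(x,\cdot)$ could be quite irregular. The monotonicity encoded in the definition of $\preceq$ is exactly what is needed there, since it confers bounded variation on $u(x,\cdot)$ in $[0,\varepsilon]$ and in $[M,+\infty)$, hence a.e.\ $t$-differentiability near the two temporal endpoints, which legitimates the exchange of $\partial_t$ with $\partial_x^2$ and bridges the gap between the mild and classical formulations.
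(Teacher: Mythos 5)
Your overall strategy departs from the paper's, and as written it has two genuine gaps. The paper never leaves the Laplace domain: it sets $h=v-w\ge 0$, reads \eqref{secondordeq} as a second-order ODE in $x$ with zero Cauchy data at $x=a$, and closes with a Gronwall argument. The only place the ordering $\preceq$ enters is in establishing the lower bound $L_Hh(x,\Phi(\lambda))\ge C_4\,\bar h(x,\Phi(\lambda))$ via Chebyshev's integral inequality, using that $h(x,\cdot)$ is comonotone with $V'_{2,H}$ on $[0,\varepsilon]$ and on $[M,+\infty)$ (where $V'_{2,H}$ is respectively increasing and decreasing by Lemma \ref{lem:Varlem}); this is exactly what makes $|g|\gtrsim|\partial_x g|$ and lets Gronwall force $g\equiv 0$. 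Your proof instead uses $\preceq$ only for positivity and for a regularity claim, and that is where it breaks.

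The first gap is the ``classical-ization.'' To pass from the mild identity \eqref{ltnongenFP} to $\partial_t u=\tfrac12 V'_{2,H}(t)\partial_x^2 u$ you must move $\partial_x^2$ inside the time integral $\int_0^t V'_{2,H}(s)u(x,s)\,ds$; this is precisely the content of Theorem \ref{thm:gain}, which requires the extra hypothesis $V'_{2,H}(\cdot)\partial_x^2 u(x,\cdot)\in L^\infty(0,+\infty)$ --- a condition on the \emph{spatial} second derivative that the definition of $\preceq$ gives you no access to. Your proposed remedy (bounded variation of $u(x,\cdot)$ near the temporal endpoints) controls the wrong quantity and, moreover, says nothing on the middle interval $[\varepsilon,M]$, where $\preceq$ imposes no monotonicity at all. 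The second gap is the endgame: after the time change you need uniqueness for the heat equation from \emph{lateral} Cauchy data $\tilde u(a,\tau)=\partial_x\tilde u(a,\tau)=0$ (and only for a.e.\ $\tau$). Interior parabolic regularity gives analyticity of $\tilde u(\cdot,\tau)$ in $(a,b)$, not up to the boundary point $x=a$, so the inductive differentiation of the PDE \emph{at} $x=a$ and the Taylor expansion there are not justified as stated; one needs a genuine lateral unique-continuation theorem (or a reflection argument across $x=a$, with care about the a.e.\ boundary conditions). Both gaps are avoided by the paper's route, which stays with the transformed ODE in $x$ and uses the ordering where it is actually needed.
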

\begin{proof}
	First of all, let us observe that, since all the operators involved are linear, $(v_\Phi-w_\Phi)$ is still a mild solution of \eqref{genFP}. Let us set $h_\Phi(x,t)=(v_\Phi(x,t)-w_\Phi(x,t))$. By the initial datum on $v$ and $w$, it holds $h_{\Phi}(x,0)=0$ and then
	\begin{equation*}
	 \Phi(\lambda)\bar{h}_\Phi(x,\lambda)=\frac{\Phi(\lambda)}{2\lambda}\pdsup{}{x}{2}\widehat{L}_{\Phi,H}\bar{h}_\Phi(x,\lambda),
	\end{equation*}
	where $\bar{h}_\Phi(x,\lambda):=\cL[h_\Phi(x,\cdot)](\lambda)$. Moreover, since $S_\Phi$ is linear, we can define $h(x,t)=v(x,t)-w(x,t)$ to obtain that $h \in \cS_\Phi$ with $h_\Phi=S_\Phi h$. Setting $\bar{h}(x,\lambda):=\cL[h(x,\cdot)](\lambda)$, we have
	\begin{equation}\label{secondordeq}
	2\Phi(\lambda)\bar{h}(x,\Phi(\lambda))=\pdsup{}{x}{2}L_{H}h(x,\Phi(\lambda)),
	\end{equation}
	that is a second order differential equation. Now we want to transform the previous second order differential equation in a system of first order ones and then write it in vector form. Let us define
	\begin{equation*}
	f(x,\lambda)=\pd{}{x}L_{H}h(x,\Phi(\lambda)) \qquad \mbox{ and } \qquad
	g(x,\lambda)=(L_Hh(x,\Phi(\lambda)),f(x,\lambda))
	\end{equation*}
	to rewrite \eqref{secondordeq} in the equivalent form
	\begin{equation*}
	\pd{}{x}g(x,\lambda)=(f(x,\lambda),2\Phi(\lambda)\bar{h}(x,\Phi(\lambda))).
	\end{equation*}
	We want to show that actually $g \equiv 0$ by using Gronwall inequality. To do this, let us observe that $f(a,\lambda)=0$ and $L_Hh(a,\Phi(\lambda))=0$, thus we have
	\begin{equation*}
	g(x,\lambda)=\int_a^x\pd{}{x}g(y,\lambda)dy
	\end{equation*}
	and then
	\begin{equation}\label{ineqpreGron}
	|g(x,\lambda)|\le \int_a^x\left|\pd{}{x}g(y,\lambda)\right|dy.
	\end{equation}
	Let us first estimate $L_Hh(x,\Phi(\lambda))$. We have
	\begin{align*}
	L_H h(x,\Phi(\lambda))&=\int_0^\varepsilon e^{-\Phi(\lambda)t}h(x,t)V'_{2,H}(t)dt\\
	&+\int_\varepsilon^M e^{-\Phi(\lambda)t}h(x,t)V'_{2,H}(t)dt\\
	&+\int_M^{+\infty} e^{-\Phi(\lambda)t}h(x,t)V'_{2,H}(t)dt\\
	&:=I_1+I_2+I_3.
	\end{align*}
	We want to achieve a lower bound for $L_H h(x,\Phi(\lambda))$, which is non-negative since $h$ is non-negative. Let us observe that $\min_{t \in [\varepsilon,M]}V'_{2,H}(t)=m>0$, thus there exists a constant $C_1>0$ such that
	\begin{equation*}
	I_2\ge C_1 \int_\varepsilon^Me^{-\Phi(\lambda)t}h(x,t)dt.
	\end{equation*}
	For $I_1$ and $I_3$ we need to use Chebyshev's integral inequality (see \cite{mitrinovic1993classical}). Concerning $I_1$, we get
	\begin{equation*}
	 I_1=\frac{1-e^{-\Phi(\lambda)\varepsilon}}{\Phi(\lambda)}\int_0^{\varepsilon}V'_{2,H}(t)h(x,t)d\left(\frac{1-e^{-\Phi(\lambda)t}}{1-e^{-\Phi(\lambda)\varepsilon}}\right)
	\end{equation*}
	where $d\left(\frac{1-e^{-\Phi(\lambda)t}}{1-e^{-\Phi(\lambda)\varepsilon}}\right)$ is a probability measure on $[0,\varepsilon]$. Thus we can use Chebyshev's integral inequality, since we can suppose $V'_{2,H}$ and $h(x,\cdot)$ to be comonotone in $[0,\varepsilon]$. Setting $C_2=\frac{\Phi(\lambda)}{1-e^{-\Phi(\lambda)\varepsilon}}\int_0^\varepsilon e^{-\Phi(\lambda)t}V'_{2,H}(t)dt>0$, we obtain
	\begin{equation*}
	I_1 \ge C_2 \int_0^{\varepsilon}e^{-\Phi(\lambda)t}h(x,t)dt.
	\end{equation*}
	Arguing in the same way for $I_3$, we have that there exists a constant $C_3>0$ such that
	\begin{equation*}
	I_3 \ge C_3 \int_M^{+\infty}e^{-\Phi(\lambda)t}h(x,t)dt.
	\end{equation*}
	Setting $C_4=\min_{i=1,2,3} C_i>0$, we obtain
	\begin{equation*}
	L_H h(x,\Phi(\lambda))\ge C_4 \bar{h}(x,\Phi(\lambda)).
	\end{equation*}
	Now let us define $k(x,\lambda)=\left|\pd{}{x}g(x,\lambda)\right|$ and observe that
	\begin{equation*}
	k(x,\lambda)=\sqrt{4\Phi^2(\lambda)\bar{h}^2(x,\Phi(\lambda))+f^2(x,\lambda)}.
	\end{equation*}
	Moreover, by using the previously obtained lower bound, setting $C_5=\min\left\{\frac{C_4^2}{4\Phi^2(\lambda)},1\right\}>0$, we have
	\begin{align*}
	|g(x,\lambda)|=\sqrt{(L_Hh(x,\Phi(\lambda)))^2+f^2(x,\lambda)}\ge C_5 k(x,\lambda).
	\end{align*}
	Plugging this inequality in Equation \eqref{ineqpreGron} and setting $C_6=C_5^{-1}$, we finally achieve
	\begin{equation*}
	k(x,\lambda)\le C_6 \int_a^x k(y,\lambda)dy.
	\end{equation*}
	Now we can use Gronwall's Inequality (see \cite{pachpatte1997inequalities}) to conclude that $k(x,\lambda)=0$. This implies that $\bar{h}(x,\Phi(\lambda))=0$. Now, considering $\lambda>0$, we have that $\Phi$ is invertible on the real line, thus we conclude that $\bar{h}(x,\lambda)=0$ for any $\lambda>0$. Finally, by injectivity of the Laplace transform, we obtain $h(x,t)=0$ for any $t>0$ and $x \in I$, that is what we wanted to prove.
\end{proof}
\subsection{Uniqueness of strong solutions}
In this subsection we want to address the problem of uniqueness of strong solutions. Concerning strong solutions, we can use the extremal point property given in Proposition \ref{extremval} to prove a weak maximum principle. To do this, we first need to show the following technical lemma.
\begin{lem}
	Let $v \in L^\infty(\R^+;C(I))$, $v_\Phi=S_\Phi v$ and $v_{\Phi,H}=S_{\Phi,H}v$. Then the following assertions are equivalent:
	\begin{itemize}
		\item[$i$] $(x_0,t_0) \in I \times \R^+$ is a maximum point of $v_\Phi$;
		\item[$ii$] $(x_0,t_0) \in I \times \R^+$ is a maximum point of $v_{\Phi,H}$.
	\end{itemize}
\end{lem}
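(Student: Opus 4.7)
The plan is to work directly from the integral representations
\begin{equation*}
v_\Phi(x,t)=\int_0^{+\infty}v(x,s)f_\Phi(s,t)\,ds,\qquad v_{\Phi,H}(x,t)=\int_0^{+\infty}V'_{2,H}(s)v(x,s)f_\Phi(s,t)\,ds,
\end{equation*}
so that the two functions integrate $v$ against the same kernel $f_\Phi(s,t)\,ds$, differing only by the weight $V'_{2,H}(s)$. The first observation is that, for $H\in(1/2,1)$, Lemma \ref{lem:Varlem} together with the integral expression of $V_{2,H}$ gives $V'_{2,H}(s)>0$ for every $s>0$, and $V'_{2,H}$ is bounded on $(0,+\infty)$. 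This strict positivity is the key structural ingredient that makes the two weighted functionals share the same extremal behaviour.

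For the implication $(i)\Rightarrow(ii)$, I would suppose that $(x_0,t_0)$ is a maximum point of $v_\Phi$ and show that the integral inequality $\int_0^{+\infty}\bigl(v(x_0,s)-v(x,s)\bigr)f_\Phi(s,t_0)\,ds\ge 0$ (valid for every $x\in I$) sharpens into a pointwise comparison $v(x_0,s)\ge v(x,s)$ for $x\in I$ and $s$ in a set of full $f_\Phi(\cdot,t_0)$-measure, which is in fact all of $(0,+\infty)$ since $f_\Phi(\cdot,t_0)$ is strictly positive. This is the crucial step and it uses the $C(I)$-continuity of $v(\cdot,s)$ together with the fact that $x$ can be tested arbitrarily. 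Once the pointwise comparison is secured, I multiply by the strictly positive $V'_{2,H}(s)f_\Phi(s,t_0)$ and integrate in $s$ to obtain $v_{\Phi,H}(x_0,t_0)\ge v_{\Phi,H}(x,t_0)$ for every $x$; an analogous argument with $x=x_0$ fixed and $t$ varying transfers the comparison to the $t$ direction and completes the maximality of $v_{\Phi,H}$ at $(x_0,t_0)$.

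The converse $(ii)\Rightarrow(i)$ is handled symmetrically. Multiplication by $V'_{2,H}$ is a bijection of $L^\infty(\R^+;C(I))$ because $V'_{2,H}>0$ and bounded; setting $w:=V'_{2,H}v$ rewrites $v_{\Phi,H}=S_\Phi w$ and $v_\Phi=S_\Phi(w/V'_{2,H})$, so the previous argument can be rerun with the roles of $v$ and $w$ interchanged and the reciprocal weight $1/V'_{2,H}$ in place of $V'_{2,H}$ (which is again positive on $(0,+\infty)$).

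The main obstacle, and where I would focus the careful writing, is the passage from the integral inequality induced by the maximum property of $v_\Phi$ to the pointwise domination of $v$ itself on the support of $f_\Phi(\cdot,t_0)$. A mere non-negativity of an integral does not in general force pointwise non-negativity of the integrand, so I expect the rigorous justification to combine the $C(I)$-regularity of $v$, the strict positivity of $f_\Phi(\cdot,t_0)$ on $(0,+\infty)$, and the freedom to test the inequality against every $x\in I$. All remaining steps (multiplying by the positive weight, integrating, and symmetry) are routine once this decisive fact is in place.
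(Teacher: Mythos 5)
Your argument hinges on a step that is not merely left unproved but is actually false: the maximality of $v_\Phi$ at $(x_0,t_0)$ does not sharpen into the pointwise comparison $v(x_0,s)\ge v(x,s)$ for all $s>0$. The family of inequalities $\int_0^{+\infty}\bigl(v(x_0,s)-v(x,s)\bigr)f_\Phi(s,t_0)\,ds\ge 0$, $x\in I$, only constrains averages of $v(\cdot,s)$ against the kernel, and averages being maximized at $x_0$ says nothing pointwise about the integrand. Concretely, take $I=[-1,1]$ and $v(x,s)=a(s)+x\,b(s)$ with $b$ changing sign in $s$ but $\int_0^{+\infty}b(s)f_\Phi(s,t_0)\,ds>0$: then $x\mapsto v_\Phi(x,t_0)$ is affine and maximized at $x_0=1$, the integral inequality holds for every $x$, yet $v(1,s)-v(x,s)=(1-x)b(s)<0$ wherever $b(s)<0$. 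Neither the continuity of $v(\cdot,s)$, nor the strict positivity of $f_\Phi(\cdot,t_0)$, nor the freedom to test against every $x$ can repair this, because the slices $v(\cdot,s)$ for different $s$ are unrelated. The situation is even worse in the $t$-direction, which you propose to treat "analogously": there the relevant kernel difference $f_\Phi(s,t_0)-f_\Phi(s,t)$ integrates to zero in $s$, hence changes sign, and no pointwise statement can be extracted at all. Since everything downstream (multiplying by the positive weight and integrating) rests on this pointwise domination, the proof does not go through. A secondary inaccuracy: multiplication by $V'_{2,H}$ is not a bijection of $L^\infty(\R^+;C(I))$, since by Lemma \ref{lem:Varlem} one has $V'_{2,H}(t)\to 0$ both as $t\to 0^+$ and as $t\to+\infty$, so $1/V'_{2,H}$ is unbounded.

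The paper's proof avoids any pointwise claim and works entirely at the level of the integrals. For one implication it uses only that $V'_{2,H}$ is bounded above by some $M>0$, so that the nonnegative weighted difference of integrals is controlled by $M$ times the unweighted one. For the converse it splits $\int_0^{+\infty}=\int_0^{\delta_n}+\int_{\delta_n}^{R_n}+\int_{R_n}^{+\infty}$, exploits that $\min_{[\delta_n,R_n]}V'_{2,H}>0$ while $\inf_{(0,+\infty)}V'_{2,H}=0$, and passes to the limit $\delta_n\to 0$, $R_n\to+\infty$, with a case analysis according to the sign of the truncated unweighted integrals. You would need to replace your central step with an argument of this kind, comparing the two integrals directly rather than their integrands.
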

\begin{proof}
	Let us show $ii \Rightarrow i$. Set $M=\sup_{t>0}V'_{2,H}(t)>0$ and suppose $(x_0,t_0)$ is a maximum point of $v_{\Phi,H}$. Then we have, for any $(x,t) \in I \times \R^+$,
	\begin{equation*}
	v_{\Phi,H}(x_0,t_0)-v_{\Phi,H}(x,t)=\int_0^{+\infty}V'_{2,H}(s)(v(x_0,s)f_\Phi(s,t_0)-v(x,s)f_\Phi(s,t))ds\ge 0.
	\end{equation*}
	On the other hand, it holds
	\begin{equation*}
	\int_0^{+\infty}V'_{2,H}(s)(v(x_0,s)f_\Phi(s,t_0)-v(x,s)f_\Phi(s,t))ds\le M (v_\Phi(x_0,t_0)-v_{\Phi,H}(x,t)),
	\end{equation*}
	thus we have
	\begin{equation*}
	M(v_\Phi(x_0,t_0)-v_{\Phi,H}(x,t))\ge 0,
	\end{equation*}
	concluding the proof.\\
	Now let us show $i \Rightarrow ii$. Fix $(x,t)\in I \times \R^+$. First of all let us suppose that there exists an increasing sequence $R_n \to +\infty$ and a decreasing sequence $\delta_n \to 0$ such that
	\begin{equation*}
	\int_{\delta_n}^{R_n}(v(x_0,s)f_\Phi(s,t_0)-v(x,s)f_\Phi(s,t))ds\ge 0.
	\end{equation*}
	Since $\min_{t \in [\delta_n,R_n]}V'_{2,H}(t)>0$, we achieve
	\begin{align*}
	v_{\Phi,H}(x_0,t_0)&-v_{\Phi,H}(x,t)\\
	&=\int_0^{\delta_n}V'_{2,H}(s)(v(x_0,s)f_\Phi(s,t_0)-v(x,s)f_\Phi(s,t))ds\\
	&+\int_{\delta_n}^{R_n}V'_{2,H}(s)(v(x_0,s)f_\Phi(s,t_0)-v(x,s)f_\Phi(s,t))ds\\
	&+\int_{R_n}^{+\infty}V'_{2,H}(s)(v(x_0,s)f_\Phi(s,t_0)-v(x,s)f_\Phi(s,t))ds\\
	&\ge \int_0^{\delta_n}V'_{2,H}(s)(v(x_0,s)f_\Phi(s,t_0)-v(x,s)f_\Phi(s,t))ds\\
	&+\int_{R_n}^{+\infty}V'_{2,H}(s)(v(x_0,s)f_\Phi(s,t_0)-v(x,s)f_\Phi(s,t))ds.
	\end{align*}
	Taking the limit as $n \to +\infty$ we obtain $v_{\Phi,H}(x_0,t_0)-v_{\Phi,H}(x,t) \ge 0$.\\
	Now let us suppose such sequences do not exist. Then, since $(x_0,t_0)$ is a maximum point of $v_{\Phi}$, this can happen only if
	\begin{equation*}
	\int_{0}^{+\infty}(v(x_0,s)f_\Phi(s,t_0)-v(x,s)f_\Phi(s,t))ds=0,
	\end{equation*}
	since $\int_{0}^{+\infty}(v(x_0,s)f_\Phi(s,t_0)-v(x,s)f_\Phi(s,t))ds>0$ goes in contradiction with the fact that the two aforementioned sequences do not exist.
	Moreover, there exist $\delta_0,R_0$ such that for any $\delta<\delta_0$ and $R>R_0$ it holds
	\begin{equation*}
	\int_{\delta}^R(v(x_0,s)f_\Phi(s,t_0)-v(x,s)f_\Phi(s,t))ds<0.
	\end{equation*}
	Since $\inf_{t \in (0,+\infty)}V'_{2,H}(t)=0$, we can consider $\delta_0$ so small and $R_0$ so big to obtain $\inf_{t \in (\delta_0,R_0)}V'_{2,H}(t)<1$. Consider any decreasing sequence $\delta_n \to 0$ such that $\delta_n<\delta_0$ and any increasing sequence $R_n \to +\infty$ such that $R_n>R_0$. Arguing as we did before, by using the fact that $\inf_{t \in (\delta_n,R_n)}V'_{2,H}(t)<1$, we achieve
	\begin{align*}
	v_{\Phi,H}(x_0,t_0)&-v_{\Phi,H}(x,t)\\
	&=\int_0^{\delta_n}V'_{2,H}(s)(v(x_0,s)f_\Phi(s,t_0)-v(x,s)f_\Phi(s,t))ds\\
	&+\int_{\delta_n}^{R_n}V'_{2,H}(s)(v(x_0,s)f_\Phi(s,t_0)-v(x,s)f_\Phi(s,t))ds\\
	&+\int_{R_n}^{+\infty}V'_{2,H}(s)(v(x_0,s)f_\Phi(s,t_0)-v(x,s)f_\Phi(s,t))ds\\
	&\ge \int_0^{\delta_n}V'_{2,H}(s)(v(x_0,s)f_\Phi(s,t_0)-v(x,s)f_\Phi(s,t))ds\\
	&+\int_{\delta_n}^{R_n}(v(x_0,s)f_\Phi(s,t_0)-v(x,s)f_\Phi(s,t))ds\\
	&+\int_{R_n}^{+\infty}V'_{2,H}(s)(v(x_0,s)f_\Phi(s,t_0)-v(x,s)f_\Phi(s,t))ds.
	\end{align*}
	Taking the limit as $n \to \infty$ we conclude the proof.
\end{proof}
Now we are ready to show the weak maximum principle for our equation.
\begin{thm}[\textbf{Weak maximum principle}]
	Let $\Phi$ be a Bernstein function that is regularly varying at $\infty$ of index $\alpha \in (0,1)$ and consider $v_\Phi=S_\Phi v$ a strong solution of \eqref{genFP} in $[a,b] \times \R^+$. Fix $T>0$ and define $\cO=[a,b] \times [0,T]$. Moreover, suppose that $S_\Phi\left(\frac{T-t}{T}\chi_{[0,T]}(t)\right)$ is $C^1((0,T])$ and $W^{1,1}(0,T)$. Let $\partial_p \cO$ be the parabolic boundary of $\cO$, i.e.
	\begin{equation*}
	\partial_p \cO=([a,b]\times \{0\}) \cup (\{a,b\}\times [0,T]).
	\end{equation*}
	Then it holds
	\begin{equation*}
	\max_{(x,t)\in \cO}v_\Phi(x,t)=\max_{(x,t)\in \partial_p\cO}v_\Phi(x,t)
	\end{equation*}
\end{thm}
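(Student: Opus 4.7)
The plan is to argue by contradiction via a standard auxiliary-function trick adapted to the Caputo-type setting. Assume toward contradiction that $M_1:=\max_{\cO}v_\Phi > M_2:=\max_{\partial_p\cO}v_\Phi$, set $\delta:=M_1-M_2>0$, and perturb $v_\Phi$ by a small multiple of a function whose $\partial^\Phi$-derivative is strictly negative, so that the perturbed function is forced to attain its maximum away from $\partial_p\cO$. Concretely, define $g_T(t):=\frac{T-t}{T}\chi_{[0,T]}(t)$ and $h_\Phi:=S_\Phi g_T$; by hypothesis $h_\Phi\in C^1((0,T])\cap W^{1,1}(0,T)$, and $0\le h_\Phi\le h_\Phi(0)=1$. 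Set
$$w_\Phi(x,t):=v_\Phi(x,t)+\varepsilon\, h_\Phi(t), \qquad 0<\varepsilon<\delta.$$
Since $h_\Phi\ge 0$ we have $\max_\cO w_\Phi\ge M_1$, while $\max_{\partial_p\cO}w_\Phi\le M_2+\varepsilon<M_1$, so the maximum of $w_\Phi$ on $\cO$ is attained at some $(x_0,t_0)$ with $x_0\in(a,b)$ and $t_0\in(0,T]$ (the top face $t=T$ is allowed since it does not belong to $\partial_p\cO$).

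The key calculation is the identity
$$\partial_t^\Phi h_\Phi(t)\;=\;S_\Phi g_T'(t)\;=\;-\frac{1}{T}\,\bP(E_\Phi(t)\le T),\qquad t>0,$$
which follows by combining the Laplace-transform identity of Proposition \ref{Proplaptrans} with the formula for $\cL[\partial^\Phi u]$ recalled at the end of Section \ref{Sec1}, exactly as in the Laplace-inversion step of the proof of Theorem \ref{thm:gain} (using that $g_T$ is absolutely continuous with $g_T'(t)=-\tfrac{1}{T}\chi_{[0,T]}(t)$ a.e.\ and $g_T(0)=1$, and that $h_\Phi$ has the regularity needed for $\partial_t^\Phi h_\Phi$ to exist pointwise a.e.\ by the stated hypothesis). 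The right-hand side is strictly negative for every $t>0$.

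By linearity, $w_\Phi=S_\Phi(v-\varepsilon g_T)$, and $w_{\Phi,H}:=S_{\Phi,H}(v-\varepsilon g_T)=v_{\Phi,H}-\varepsilon S_{\Phi,H}g_T$ satisfies $\pdsup{}{x}{2}w_{\Phi,H}=\pdsup{}{x}{2}v_{\Phi,H}$, because $g_T$ is independent of $x$. Applying the preceding lemma to $v-\varepsilon g_T$, the point $(x_0,t_0)$ is also a maximum of $w_{\Phi,H}$; combined with $w_{\Phi,H}(\cdot,t_0)\in C^2([a,b])$ from Lemma \ref{lem:weitoFok} and $x_0\in(a,b)$, this yields $\pdsup{}{x}{2}w_{\Phi,H}(x_0,t_0)\le 0$. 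The strong-solution regularity of $v_\Phi(x_0,\cdot)$ and the hypothesis on $h_\Phi$ give $w_\Phi(x_0,\cdot)\in W^{1,1}(0,t_0)\cap C^1((0,t_0])$, so Proposition \ref{extremval} (applicable since $\Phi$ is regularly varying of index $\alpha\in(0,1)$) forces $\partial_t^\Phi w_\Phi(x_0,t_0)\ge 0$. However, using the equation $\partial_t^\Phi v_\Phi=\tfrac{1}{2}\cF_{\Phi,H}v_\Phi=\tfrac{1}{2}\pdsup{}{x}{2}v_{\Phi,H}$ (Lemma \ref{lem:weitoFok}) together with the identity above,
$$\partial_t^\Phi w_\Phi(x_0,t_0)\;=\;\tfrac{1}{2}\pdsup{}{x}{2}w_{\Phi,H}(x_0,t_0)-\tfrac{\varepsilon}{T}\bP(E_\Phi(t_0)\le T)\;<\;0,$$
contradicting the previous inequality and completing the proof.

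The main obstacle is the rigorous identification $\partial_t^\Phi h_\Phi=S_\Phi g_T'$: since $g_T$ has a corner at $t=T$, working directly from the convolution definition is delicate, and the cleanest route is via Laplace transforms, which is precisely why the statement imposes the regularity of $S_\Phi g_T$. Once this identity is secured, the remaining steps combine ingredients already developed in the paper: linearity of $S_\Phi$, the max-transfer lemma, the $C^2$-in-$x$ regularity from Lemma \ref{lem:weitoFok}, and the extremal-point property of Proposition \ref{extremval}.
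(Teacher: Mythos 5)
Your proof is correct and follows essentially the same route as the paper's: the same auxiliary function $S_\Phi\left(\frac{T-t}{T}\chi_{[0,T]}(t)\right)$, the same Laplace-transform computation showing its $\partial^\Phi$-derivative equals $-\frac{1}{T}\int_0^T f_\Phi(s,t)\,ds<0$, and the same contradiction at an interior maximum obtained by combining Proposition \ref{extremval} with the second-derivative test supplied by the max-transfer lemma and Lemma \ref{lem:weitoFok}. (One typo: by linearity $w_\Phi=S_\Phi(v+\varepsilon g_T)$, not $S_\Phi(v-\varepsilon g_T)$; this does not affect the argument.)
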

\begin{proof}
	First of all, let us observe that for any constant $C \in \R$ it holds $v_\Phi+C=S_\Phi(v+C)$, $\cF_{\Phi,H}(v_\Phi+C)=\cF_{\Phi,H}v_\Phi$ and $\partial^\Phi (v_\Phi+C)=\partial^\Phi v_\Phi$. Thus if $v_\Phi$ is an inverse-subordinated strong solution of \eqref{genFP}, so it is also $v_\Phi+C$. In conclusion, we can suppose, without loss of generality, that $v_\Phi \ge 0$.\\
	Let us also recall that, by Lemma \ref{lem:weitoFok}, we have
	\begin{equation*}
	\cF_{\Phi,H}v_\Phi=\pdsup{}{x}{2}S_{\Phi,H}v.
	\end{equation*}
	Now let us suppose by contradiction $v_\Phi$ admits a maximum point $(x_0,t_0)$ belonging to $\mathring{\cO} \cup ((a,b)\times \{T\})$ and that $M=\max_{(x,t)\in \partial_p \cO}v_\Phi(x,t)<v_\Phi(x_0,t_0)$. Fix $\delta=v_\Phi(x_0,t_0)-M>0$ and define for any $(x,t)\in \cO$ the auxiliary function
	\begin{equation*}
	w_\Phi(x,t)=v_\Phi(x,t)+\frac{\delta}{2}S_\Phi \left(\frac{T-\tau}{T}\chi_{[0,T]}(\tau)\right)(t)
	\end{equation*}
	where $\chi_{[0,T]}(\tau)$ is the indicator function of the interval $[0,T]$. Since $\frac{T-t}{T} \in [0,1]$ as $t \in[0,T]$, it holds
	\begin{equation*}
	v_\Phi(x,t)\le w_\Phi(x,t)\le v_\Phi(x,t)+\frac{\delta}{2}
	\end{equation*}
	for any $(x,t)\in \cO$.\\
	Moreover, for any $(x,t) \in \partial_p \cO$ it holds
	\begin{equation*}
	w_\Phi(x_0,t_0)\ge v_\Phi(x_0,t_0)=\delta +M \ge \delta +v_\Phi(x,t)\ge \frac{\delta}{2}+w_\Phi(x,t)
	\end{equation*}
	and then, since $(x_0,t_0)\not \in \partial_p \cO$ and $w_\Phi$ is continuous in $[a,b]\times [0,T]$, $w_{\Phi}$ admits a maximum point $(x_1,t_1) \in \mathring{\cO} \cup ((a,b)\times \{T\})$.\\
	Now let us also recall that
	\begin{equation}\label{vPhiaswPhi}
	v_\Phi(x,t)=w_\Phi(x,t)-\frac{\delta}{2}S_\Phi\left(\frac{T-\tau}{T}\chi_{[0,T]}(\tau)\right)(t).
	\end{equation}
	Now we need to exploit $\partial_t^\Phi v_\Phi$ in terms of $\partial_t^\Phi w_\Phi$. To do this, we need to find the non-local derivative of $S_\Phi\left(\frac{T-\tau}{T}\chi_{[0,T]}(\tau)\right)(t)$.
	Set $g(t)=\frac{T-t}{T}\chi_{[0,T]}(t)$ and $g_\Phi(t)=S_\Phi g(t)$. First of all, since $w_\Phi=v_\Phi+g_\Phi$ and both $\partial^\Phi_t v_\Phi(x,t)$ and $\partial^\Phi g_\Phi(t)$ exist (where the latter exists since $g_\Phi(t)\in W^{1,1}(0,T)$), then also $\partial_t^\Phi w_\Phi$ is well defined.\\
	Now we want to determine $\partial^\Phi g_\Phi(t)$. Let us suppose a priori that $\partial^\Phi g_\Phi(t)$ is Laplace transformable. Then we have, since $g_\Phi(0)=1$,
	\begin{equation*}
	\cL[\partial^\Phi g_\Phi]=\Phi(\lambda)\cL[g_\Phi]-\frac{\Phi(\lambda)}{\lambda}=-\frac{\Phi(\lambda)(1-e^{-\Phi(\lambda)T})}{T\lambda \Phi(\lambda)}.
	\end{equation*}
	On the other hand, it holds
	\begin{equation*}
	\cL[S_\Phi \chi_{[0,T]}]=\frac{\Phi(\lambda)(1-e^{-\Phi(\lambda)T})}{\lambda \Phi(\lambda)},
	\end{equation*}
	thus we have
	\begin{equation*}
	\partial^\Phi g_\Phi(t)=-\frac{1}{T}\int_0^Tf_\Phi(s,t)ds.
	\end{equation*}
	Using last equality, together with identity \eqref{vPhiaswPhi}, we get
	\begin{equation}\label{dervPhiaswPhi}
	\partial^\Phi v_\Phi(x,t)=\partial^\Phi w_\Phi(x,t)+\frac{\delta}{2 T}\int_0^Tf_\Phi(s,t)ds.
	\end{equation}
	Now let us show that $w_\Phi$ belongs to the range of $S_\Phi$. Indeed, if we define $w(x,t)=v(x,t)+g(t)$, we obtain that $w_\Phi=S_\Phi w$.\\
	Now we want to express the action of the Fokker-Planck operator on $v_\Phi$ in terms of $w_\Phi$. To do this, since $g$ does not depend on $x$, just observe that
	\begin{equation*}\pdsup{}{x}{2}S_{\Phi,H}w(x,t)=\pdsup{}{x}{2}S_{\Phi,H}v(x,t).
	\end{equation*}
	Thus, thanks to Lemma \ref{lem:weitoFok}, we can rewrite equation \eqref{genFP} as
	\begin{equation*}
	\partial^\Phi w_\Phi(x,t)+\frac{\delta}{2T}\int_0^T f_\Phi(s,t)ds-\frac{1}{2}\pdsup{}{x}{2}S_{\Phi,H}w(x,t)=0.
	\end{equation*}
	Now let us observe that by hypotheses $w_\Phi$ belongs to $C^1$ in $(0,T]$ and $(x_1,t_1)$ is a maximum point for $w_\Phi$ belonging to $\mathring{\cO}\cup((a,b)\times \{T\})$, hence, by Proposition \ref{extremval}, we know that $\partial^\Phi w_\Phi(x_1,t_1)\ge 0$. Moreover, $(x_1,t_1)$ is also a maximum point for $S_{\Phi,H}w_(x,t)$, hence $\pdsup{}{x}{2}S_{\Phi,H}w(x_1,t_1)\le 0$. Thus we have
	\begin{equation*}
	\partial^\Phi w_\Phi(x_1,t_1)+\frac{\delta}{2T}\int_0^T f_\Phi(s,t_1)ds-\frac{1}{2}\pdsup{}{x}{2}S_{\Phi,H}w(x_1,t_1)\ge \frac{\delta}{2T}\int_0^T f_\Phi(s,t_1)ds>0,
	\end{equation*}
	which is a contradiction.
\end{proof}
\begin{rmk}
	Let us observe that, if $\Phi(\lambda)=\lambda^\alpha$, the fact that $\frac{T-t}{T}\chi_{[0,T]}(t)$ is Lipschitz implies that $S_\alpha\left(\frac{T-t}{T}\chi_{[0,T]}(t)\right)$ belongs to $C^1$ by Proposition \ref{derprop}. It holds $\left|\der{}{t}S_\alpha\left(\frac{T-t}{T}\chi_{[0,T]}(t)\right)\right| \sim C(\alpha,T)t^{-1+\alpha}$ as $t \to 0^+$, thus $S_\alpha\left(\frac{T-t}{T}\chi_{[0,T]}(t)\right)$ it belongs to $W^{1,1}(0,T)$.
\end{rmk}
As for classical parabolic equations, the weak maximum principle easily implies the following uniqueness result.
\begin{cor}
	Let $v_\Phi=S_\Phi v$ and $w_\Phi=S_\Phi w$ be strong solutions of Equation \eqref{genFP} in $\cO=(a,b)\times \R^+$. Suppose that for any $T>0$ the function $S_\Phi\left(\frac{T-t}{T}\chi_{[0,T]}(t)\right)$ belongs to $C^1((0,T])$. Then, if $v_\Phi(x,t)=w_\Phi(x,t)$ for any $(x,t)\in \partial_p\cO$, it holds $v_\Phi \equiv w_\Phi$.
\end{cor}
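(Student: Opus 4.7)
The plan is the standard reduction of a uniqueness result to the weak maximum principle just established, applied both to the difference and its negative. Set $u_\Phi:=v_\Phi-w_\Phi$ and $u:=v-w$. Since $S_\Phi$ is linear, $u_\Phi=S_\Phi u$; since all of $\partial_t^\Phi$, $\cF_{\Phi,H}$ and the hypothesis $v_\Phi(x,\cdot),w_\Phi(x,\cdot)\in C^1(0,+\infty)\cap W^{1,1}(0,\varepsilon)$ are preserved under real linear combinations, $u_\Phi$ is itself a strong solution of \eqref{genFP} on $\cO$. By hypothesis $u_\Phi$ vanishes on $\partial_p\cO$.

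Next, I would fix an arbitrary $T>0$ and restrict attention to $\cO_T:=[a,b]\times[0,T]$, whose parabolic boundary is $([a,b]\times\{0\})\cup(\{a,b\}\times[0,T])\subseteq \partial_p\cO$. Since the standing assumption $S_\Phi\bigl(\tfrac{T-t}{T}\chi_{[0,T]}(t)\bigr)\in C^1((0,T])\cap W^{1,1}(0,T)$ (the $W^{1,1}$ part follows from the Laplace-transform computation carried out in the proof of the weak maximum principle, which shows that this function has Laplace transform $\frac{1-e^{-\Phi(\lambda)T}}{T\lambda}$ and hence an $L^1_{\mathrm{loc}}$ derivative) does not depend on the specific solution, the weak maximum principle applies verbatim to $u_\Phi$ on $\cO_T$ and yields
\begin{equation*}
\max_{(x,t)\in\cO_T}u_\Phi(x,t)=\max_{(x,t)\in\partial_p\cO_T}u_\Phi(x,t)=0.
\end{equation*}

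To get the matching lower bound, I would apply the same principle to $-u_\Phi=S_\Phi(-u)$, which is again a strong solution of \eqref{genFP} on $\cO$ vanishing on $\partial_p\cO$. This gives $\max_{\cO_T}(-u_\Phi)=0$, i.e. $\min_{\cO_T}u_\Phi=0$. Combining the two bounds, $u_\Phi\equiv 0$ on $\cO_T$, and since $T>0$ is arbitrary we conclude $v_\Phi\equiv w_\Phi$ on $\cO$.

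The only point that requires a moment of care, and which I would view as the main obstacle, is checking that $-u_\Phi$ genuinely falls within the hypotheses of the weak maximum principle: namely, that it is of the form $S_\Phi(\cdot)$ with the inverse-subordinated strong-solution regularity. Linearity of $S_\Phi$ and of the operators $\partial_t^\Phi$ and $\cF_{\Phi,H}$ settles this, since $-u_\Phi=S_\Phi(w-v)$ and both the $C^1(0,+\infty)$ and $W^{1,1}(0,\varepsilon)$ properties in the definition of strong solution are stable under subtraction. No further arguments (mollification, regularization) are required here because we are already in the strong-solution class.
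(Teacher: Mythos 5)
Your proposal is correct and follows essentially the same route as the paper: form the difference $v_\Phi-w_\Phi$, note it is again a strong solution vanishing on the parabolic boundary by linearity of $S_\Phi$, $\partial_t^\Phi$ and $\cF_{\Phi,H}$, apply the weak maximum principle on $\cO_T$ for arbitrary $T>0$, and let $T\to+\infty$. You are in fact slightly more careful than the paper, which compresses the two-sided step (applying the principle to both $\pm(v_\Phi-w_\Phi)$ to get the matching lower bound) into a single invocation of the maximum principle.
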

\begin{proof}
	Set $\cO_T=(a,b)\times (0,T)$ and $h=v-w$. Since all the involved operators are linear, the function $h_\Phi=v_\Phi-w_\Phi$ is still a strong solution of Equation \eqref{genFP}. Moreover $h_\Phi(x,t)=0$ for any $(x,t)\in \partial_p \cO_T$. Thus, by weak maximum principle, we have $h_\Phi\equiv 0$ on $\cO_T$. Since $T>0$ is arbitrary, we conclude the proof.
\end{proof}
In particular, this implies the following result.
\begin{cor}
	The probability density function $p_{H,\alpha}$ is the unique strong solution of Equation \eqref{genFP} in $(\R \setminus \{0\}) \times (0,+\infty)$ such that, for any fixed $t>0$, $\lim_{x \to \pm \infty}p_{H,\alpha}(x,t)=0$, $p_{H,\alpha}(0,t)=\int_0^{+\infty}p_H(0,s)f_\alpha(s,t)ds$ and $p_{H,\alpha}(x,0)=0$.
\end{cor}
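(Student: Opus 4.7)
The strategy is to combine two results already established. The corollary immediately following Theorem~\ref{thm:gain} ensures that $p_{H,\alpha}$ itself is a strong solution of \eqref{genFP}, so only uniqueness remains. Since the preceding uniqueness corollary is stated for bounded space intervals $(a,b)$, I will localize the problem to cylinders $(0,R)\times(0,T)$ and $(-R,0)\times(0,T)$, apply the weak maximum principle, and then send $R,T\to+\infty$. The hypothesis $S_\alpha\bigl(\frac{T-t}{T}\chi_{[0,T]}(t)\bigr)\in C^1((0,T])\cap W^{1,1}(0,T)$ required by the weak maximum principle is verified in the $\alpha$-stable case by the remark appearing directly after that theorem.

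Let $v_\Phi=S_\Phi v$ be any strong solution of \eqref{genFP} on $(\R\setminus\{0\})\times(0,+\infty)$ satisfying the three stated boundary conditions. Since $\partial^\Phi$ and $\cF_{\Phi,H}$ are linear, the difference $h_\Phi := v_\Phi - p_{H,\alpha}$ is itself a strong solution, and the boundary data reduce to $h_\Phi(x,0)=0$, $h_\Phi(0,t)=0$, and $\lim_{x\to\pm\infty}h_\Phi(x,t)=0$. Fix $T>0$, $R>0$, and consider $\cO_R^+ := (0,R)\times(0,T)$. Applying the weak maximum principle both to $h_\Phi$ and to $-h_\Phi$ gives
\begin{equation*}
\sup_{(x,t)\in\cO_R^+}|h_\Phi(x,t)|\le\sup_{(x,t)\in\partial_p\cO_R^+}|h_\Phi(x,t)|=\sup_{t\in[0,T]}|h_\Phi(R,t)|,
\end{equation*}
the last equality because $h_\Phi$ already vanishes on $[0,R]\times\{0\}$ and on $\{0\}\times[0,T]$. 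The analogous argument on $\cO_R^- := (-R,0)\times(0,T)$ bounds the negative side by $\sup_{t\in[0,T]}|h_\Phi(-R,t)|$.

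Sending $R\to+\infty$ should drive these right-hand sides to zero, whence $h_\Phi\equiv 0$ on $(\R\setminus\{0\})\times(0,T)$; letting then $T\to+\infty$ gives $v_\Phi\equiv p_{H,\alpha}$. The main obstacle is precisely this final limit: the hypothesis only provides $\lim_{x\to\pm\infty}h_\Phi(x,t)=0$ \emph{pointwise} in $t$, whereas the weak maximum principle forces us to control the \emph{supremum} $\sup_{t\in[0,T]}|h_\Phi(R,t)|$. To upgrade pointwise to uniform convergence on $[0,T]$, I would exploit the subordinated representation $h_\Phi=S_\Phi(v-p_H)$ together with the fact that $p_H(\cdot,s)$ vanishes at infinity uniformly in $s$ on each compact interval; a dominated-convergence estimate inside $\int_0^{+\infty}(v(x,s)-p_H(x,s))f_\alpha(s,t)\,ds$ should then transfer the uniform decay in $x$ from the unsubordinated to the subordinated level, under the $L^\infty(\R^+;C(I))$ bound implicit in the framework in which strong solutions have been formulated.
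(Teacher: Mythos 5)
Your overall strategy is the one the paper intends: the paper gives no written proof of this corollary beyond the phrase ``this implies,'' and the intended derivation is exactly your combination of the gain-of-regularity corollary (for existence) with the weak maximum principle on truncated cylinders (for uniqueness), using the remark after the maximum principle to check the $C^1((0,T])\cap W^{1,1}(0,T)$ hypothesis in the $\alpha$-stable case. You have also correctly located the only nontrivial analytic point, namely that the maximum principle on $(0,R)\times(0,T)$ leaves you with the boundary term $\sup_{t\in[0,T]}|h_\Phi(\pm R,t)|$, while the hypothesis only gives $\lim_{x\to\pm\infty}h_\Phi(x,t)=0$ for each fixed $t$.

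However, the mechanism you sketch for closing that gap does not work as stated, so the proof is not complete. Writing $h_\Phi(x,t)=\int_0^{+\infty}\bigl(v(x,s)-p_H(x,s)\bigr)f_\alpha(s,t)\,ds$ and estimating inside the integral yields at best $\sup_{t\in[0,T]}|h_\Phi(R,t)|\le \sup_{s>0}|v(R,s)-p_H(R,s)|$. The term coming from $p_H$ can indeed be controlled, since the Gaussian density $p_H(x,s)=(2\pi V_{2,H}(s))^{-1/2}e^{-x^2/(2V_{2,H}(s))}$ satisfies $\sup_{s>0}p_H(x,s)\le C|x|^{-1}\to 0$. But the competitor $v$ is only constrained through its \emph{subordinated} version $v_\Phi$, and only \emph{pointwise} in $t$; nothing in the hypotheses of the corollary gives decay of $\sup_{s>0}|v(R,s)|$ as $R\to+\infty$, and the membership $v\in L^\infty(\R^+;C(I))$ for compact $I$ carries no information at spatial infinity. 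So the dominated-convergence step you invoke has no majorant to work with on the $v$-side. To actually finish, you would need either to strengthen the uniqueness class (e.g.\ require the decay $v_\Phi(x,t)\to 0$ to be uniform for $t$ in compacts, which is how such statements are usually formulated and which $p_{H,\alpha}$ itself satisfies), or to supply an equicontinuity-in-$t$ argument for $h_\Phi(R,\cdot)$ (uniform in large $R$) that upgrades the pointwise limit to a uniform one, e.g.\ via continuity of $t\mapsto f_\alpha(\cdot,t)$ in $L^1$ together with a global bound on $h$ away from $x=0$. As it stands, the step ``sending $R\to+\infty$ should drive these right-hand sides to zero'' is asserted rather than proved, and this is precisely the step on which the whole uniqueness claim rests.
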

\begin{rmk}
	Let us observe that all we stated in this Section for strong solutions can be extended to the case of classical solutions. Indeed, by using Friedrich's mollifiers, one can extend Proposition \ref{extremval} to the case in which $u \not \in C^1$. Thus, the weak maximum principle holds also for classical solutions without asking for $S_\Phi\left(\frac{T-t}{T}\chi_{[0,T]}(t)\right)$ to be in $C^1$.
\end{rmk}

\section*{Acknowledgements}
This research is partially supported by MIUR - PRIN 2017, project Stochastic Models for Complex Systems, no. 2017JFFHSH, by Gruppo Nazionale per il Calcolo Scientifico (GNCS-INdAM), by Gruppo Nazionale per l'Analisi Matematica, la Probabilit\`a e le loro Applicazioni (GNAMPA-INdAM). The research of the 2nd author was partially supported by ToppForsk project nr. 274410 of the Research Council of Norway with title STORM: Stochastics for Time-Space Risk Models.

\bibliographystyle{plain}
\bibliography{biblio}      
\end{document}